\newtheorem{thm}{Theorem}[section]
\newtheorem{cor}[thm]{Corollary}
\newtheorem{prop}[thm]{Proposition}
\newtheorem{defi}[thm]{Definition}
\theoremstyle{definition}
\newtheorem*{remark}{Remark}
\newtheorem{exam}{Example}[section]
\newcommand{\ra}{\rightarrow}
\newcommand{\mZ}{\mathbb{Z}}
\newcommand{\mQ}{\mathbb{Q}}
\newcommand{\mC}{\mathbb{C}}
\newcommand{\mR}{\mathbb{R}}
\newcommand{\mN}{\mathbb{N}}
\newcommand{\vtl}{\vartriangleleft}
\newcommand{\tzeta}{\tilde{\zeta}}
\newcommand{\loc}{\text{loc}}
\newcommand{\calM}{\mathcal{M}}
\newcommand{\bL}{\mathbf{L}}
\newcommand{\Gr}{\operatorname{Gr}}
\newcommand{\calX}{\mathcal{X}}
\renewcommand{\rm}{\mathrm}
\newcommand{\codim}{\operatorname{codim}}
\newcommand{\mbf}{\mathbf}
\newcommand{\Cal}{\mathcal}
\newcommand{\scr}{\mathscr}
\newcommand{\Gal}{\operatorname{Gal}}
\title{Reduced zeta functions of Lie algebras}
\author{Anton Evseev}
\date{}
\begin{document}
\maketitle

\begin{abstract}
We define reduced zeta functions of Lie algebras, which can be derived, via the Euler characteristic, from motivic zeta functions counting subalgebras and ideals.
We show that reduced zeta functions of Lie algebras possessing a suitably well-behaved basis are easy to analyse. 
We prove that reduced zeta functions are multiplicative under certain conditions and investigate which reduced zeta functions have functional equations. 
\footnote{\emph{2000 Mathematics Subject Classification.}  
 14G10, 17B30.}
\end{abstract}

\section{Introduction}\label{reducedover}

Let $p$ be a prime. Let $L_p$ be a torsion-free finite dimensional Lie algebra over the ring $\mZ_p$ of $p$-adic integers. The \emph{zeta functions} of $L_p$ are defined by
\begin{eqnarray*}
\zeta_{L_p}^{\vtl}(s)& = & \sum_{n=0}^{\infty} a_{n,p}^{\vtl} p^{-ns}, \\
\zeta_{L_p}^{\le}(s)& = & \sum_{n=0}^{\infty} a_{n,p}^{\le} p^{-ns}
\end{eqnarray*}
where $a_{n,p}^{\vtl}$ (respectively $a_{n,p}^{\le}$) is the number of ideals (respectively subalgebras) of index $p^n$ in $L_p$. If $L$ is a Lie algebra over $\mZ$ additively isomorphic to $\mZ^d$, the \emph{local zeta functions} of $L$ are defined by 
$$\zeta_{L,p}^{*}(s)=\zeta_{L\otimes \mZ_p}^*(s)$$
(here, and in what follows, $*$ stands  either for $\vtl$ or for $\le$).

Properties of zeta functions of Lie algebras and of the related (via the Mal'cev correspondence) zeta functions of finitely generated nilpotent groups were first investigated in the seminal paper~\cite{GSS}. These functions have received considerable attention since its publication. Surveys of the topic are given in~\cite{survey, duSW}.
In this paper we shall concentrate on Lie algebras. However, using the Mal'cev correspondence, one can interpret the results of this paper in terms of zeta functions of nilpotent groups.

It will be more convenient for us to view $p^{-s}$ as a single variable. We modify the definitions accordingly:
$$ \tzeta_{L_p}^{*}(T):= \sum_{n=0}^{\infty} a_{n,p}^{*} T^n, $$ 
$$\tzeta_{L,p}^{*}(T):=\tzeta_{L\otimes \mZ_p}^{*}(T).$$ 
These are power series in one variable $T$. 
By \cite{GSS}, Theorem 3.5, they are rational functions in $T$.  

In many cases, $L$ has a \emph{uniform} ideal (or subalgebra) zeta function. That is,
$$\tzeta_{L,p}^{*}(T)=f(p,T)$$
 for almost all (that is, all but finitely many) values of $p$, where $f(X,Y)$ is a rational function in two variables. 
This paper considers, loosely speaking, the `reduced' zeta function 
$$R_L^* (T):=f(1,T).$$
We shall see that in certain cases, $R_L^*(T)$ can be easily calculated and has a much simpler form than the usual zeta function $\zeta_{L,p}^{*}(s)$. A reduced zeta function may be defined even when $\tzeta_{L,p}^{*}(T)$ is not uniform. For this purpose we use motivic zeta functions defined by M. du Sautoy and F. Loeser~\cite{duSL}. 

Section~\ref{motivic} contains definitions and properties of motivic zeta functions as well as those of the Euler characteristic. In Section \ref{main} we define reduced zeta functions in general and show that, unlike the usual zeta functions, many reduced zeta functions are relatively easy to analyse and to calculate. We also prove that, under suitable conditions,
$$R_{L\oplus N}^* (T)=R_L^* (T) R_N^* (T).$$
In Section~\ref{nice} we calculate reduced zeta functions for some Lie algebras possessing a well-behaved basis. 
In Section~\ref{funeqs} we investigate which reduced zeta functions have functional equations. In particular, we show that 
$$R_L^{\le} (T^{-1})= (-1)^d T^d R_L^{\le} (T),$$
provided the Lie algebra $L$ has a suitably well-behaved basis. We also find sufficient conditions for $R_L^{\vtl}(T)$ to have a functional equation. This result may help investigate functional equations of local ideal zeta functions. 

There is a price to be paid for these properties: a reduced zeta function encodes considerably less information about a Lie algebra than a usual zeta function does. In particular, it is possible for two Lie algebras to have different reduced zeta functions counting ideals, but the same ideal reduced zeta function (see Example~\ref{coinc} below).

Throughout the paper, $\mN$ denotes the set of all positive integers. 
We denote by $\mZ_{\ge 0}$, $\mR_{\ge 0}$, $\mR_{\ge 0}$ the sets of nonnegative 
integers, nonnegative real numbers, positive real numbers respectively. 
We write $S[\![t]\!]$ for the ring of formal power series over a commutative ring $S$.

\textbf{Acknowledgements.} The author is very grateful to Marcus du Sautoy for his continuous support. Luke Woodward helped the research enormously by providing many explicit calculations of local zeta functions. The author would also like to thank Fran\c{c}ois Loeser, Misha Gavrilovich, Jeffrey Gianciracusa, and Gareth Jones for useful discussions and advice and both referees for spotting a number of errors and for helpful suggestions. The author was supported by a Scatcherd European Scholarship.

\section{Motivic zeta functions and the Euler characteristic} \label{motivic}

In order to be able to define a reduced zeta function for all Lie rings $L$ (not just for those whose corresponding zeta function is uniform) and to analyse it rigorously, we need the concept of a motivic zeta function developed by du Sautoy and Loeser~\cite{duSL}. We recall the necessary definitions and results from~\cite{duSL}.

Throughout the remainder of this section, $k$ will denote a field of characteristic zero. The \emph{Grothendieck ring} $\mathcal{M}$ of algebraic varieties over $k$ is generated by symbols $[S]$, for each algebraic variety $S$ over $k$, with the relations

\begin{enumerate}[(i)]
\item $[S]=[S']$ if $S$ is isomorphic to $S'$;
\item $[S]=[S\setminus S']+[S']$ if $S'$ is closed in $S$;
\item $[S\times S']=[S][S']$. 
\end{enumerate} 
Set $\mathbf{L}:=[\mathbb{A}_k^1]$ in $\mathcal{M}$, where $[\mathbb{A}_k^r]$ is the affine $r$-dimensional space over $k$.
Denote by $\mathcal{M}_{\loc}$ the ring $\mathcal{M}[\mathbf{L}^{-1}]$ obtained by localisation, and let $\mathcal{M}[T]_{\loc}$ be the subring of $\mathcal{M}_{\loc}[\![T]\!]$ generated by $\mathcal{M}_{\loc}[T]$ and all the series $(1-\mathbf{L}^a T^b)^{-1}$ where $a\in\mZ$ and $b\in\mathbb{N}$.  

Let $Y$ be an algebraic variety over $k$, that is a reduced separated $k$-scheme of finite type.  If $K$ is a field which is a finite extension of $k$, write $Y(K)$ for the set of rational points of $Y$ over $K$.  By the set underlying $Y$ we shall mean the set of closed points of $Y$.
This set is in one-to-one correspondence with the set consisting, for each finite extension $K$ of $k$, of the orbits of rational points of $Y$ over $K$ under the action of the Galois group $\Gal(K/k)$. A subset of $Y$ is said to be \emph{constructible} if it can be obtained from Zariski closed subsets of $Y$ by taking finite unions and complements. If $A$ is a constructible subset and $K$ is a finite extension of $k$, one can define the set $A(K)$ of points of $A$ over $k$ as follows:
\[
A(K):= \{ x\in Y(K): \text{ the orbit of } x \text{ under } \Gal(K/k) \text{ belongs to } A \}.
\]
To any constructible subset $A$ one can associate a canonical element $[A]$ of the Grothendieck ring $\Cal{M}$. 

 Let $L$ be a Lie algebra over $k[\![t]\!]$, finitely generated as a $k[\![t]\!]$-module. By a class $\calX$ of subalgebras of $L$ we understand the data consisting, for every field $K$ which is a finite extension of $k$, of a family $\calX(K)$ of subalgebras of $L\otimes_k K$. We shall be concerned only with the classes $\calX^{\le}$ and $\calX^{\vtl}$ where $\calX^{\le}(K)$ is the set of closed subalgebras of $L\otimes_k K$ which are $K[\![t]\!]$-submodules of $L\otimes_k K$ and $\calX^{\vtl}(K)$ is the set of closed ideals of $L\otimes_{k} K$ that are $K[\![t]\!]$-ideals of $L\otimes_k K$.  In what follows, we shall assume that $\calX$ is one of these two classes.

We shall view the finite Grassmannian $X_n=\Gr(L/t^n L)$ as an algebraic variety  over $k$. The points of $X_n$ over a finite extension $K$ of $k$ are vector 
subspaces of $(L\otimes_{k} K)/(t^n L \otimes_k K)$. Define
$$
A_n(\calX)(K):=\{H\in \calX(K):\codim_{L\otimes_k K} H = n \}. 
$$
Note that $t^n L\otimes_k K$ is contained in every element of $A_n(\calX)(K)$. Thus, $A_n(\calX)(K)$ can be thought of as a subset of $X_n(K)$. By~\cite{duSL}, Lemma 2.12, there exists a constructible subset $B_n$ of $X_n$ such that 
$B_n(K)=A_n (\calX)(K)$ for all finite extensions $K$ of the field $k$. Clearly, such $B_n$ is unique, so we can associate to $A_n (\calX)$ the element $[B_n]$ of the Grothendieck ring. The \emph{motivic zeta function} of $L$ and $\calX$ is defined by
\[ P_{L,\calX}(T)=\sum_{n=0}^{\infty} [B_n] T^n. \]
It takes values in $\calM[\![T]\!]$. The following result establishes the rationality of a motivic zeta function.

\begin{thm}\label{rational} \emph{(\cite{duSL},Theorem 5.1 (1))} Let $k$ be a field of characteristic zero. Let $L$, $\calX^{\le}$, $\calX^{\vtl}$ be as above. Then $P_{L,\calX^{*}}(T)$ belongs to $\calM[T]_{\loc}$
for $*\in \{\le,\vtl\}$.
\end{thm}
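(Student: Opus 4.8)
\skproof
The plan is to reduce the statement to the rationality theorem for motivic integrals, following the strategy of \cite{duSL}. I would first recall the arithmetic prototype behind \cite{GSS}, Theorem 3.5: counting finite-index subalgebras of $L_p$ reduces to a $p$-adic integral. Concretely, after choosing a $\mZ_p$-basis of $L\otimes\mZ_p$, a subalgebra of index $p^n$ is spanned by the rows of a $d\times d$ upper-triangular matrix over $\mZ_p$, with $n$ equal to the sum of the $p$-adic valuations of the diagonal entries; the condition that the corresponding lattice be closed under the Lie bracket (respectively, be an ideal) unwinds into a finite system of divisibility conditions among the matrix entries, that is, into conditions on the $p$-adic valuations of certain polynomials in these entries whose coefficients are the structure constants of $L$. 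The zeta function is thereby expressed as a $p$-adic integral of a power of the $p$-adic absolute value over a set cut out by such valuation conditions, and Denef's rationality theorem --- proved via $p$-adic cell decomposition or via resolution of singularities --- gives rationality.

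I would then transport this picture to the motivic setting, replacing $\mZ_p$ by $k[\![t]\!]$ and the $p$-adic valuation by the $t$-adic order. Working over the Grassmannians $X_n=\Gr(L/t^n L)$ of the statement (or, equivalently, inside an affine space $\M_d(k[\![t]\!])$ of matrices), the set $A_n(\calX^{*})$ of codimension-$n$ subalgebras, respectively ideals, is described by conditions on the $t$-orders of polynomials with coefficients in $k$; that these conditions define a constructible family $B_n\subseteq X_n$ is exactly \cite{duSL}, Lemma 2.12. The heart of the argument is then to recognise the generating series $P_{L,\calX^{*}}(T)=\sum_{n}[B_n]T^n$ as a motivic integral over a definable subset of an arc space, with integrand a power $\bL^{-\lambda}$, where $\lambda$ is a $\mZ$-linear combination of the $t$-orders of the polynomials above, and with the auxiliary variable $T$ bookkeeping the codimension (which is itself such a combination of $t$-orders).

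Finally I would invoke the motivic rationality theorem of Denef--Loeser: a motivic integral of this shape lies in $\calM[T]_{\loc}$. One proves this by applying an embedded resolution of singularities to the polynomials involved and then using the motivic change-of-variables formula to evaluate the integral upstairs, where it becomes an explicit finite combination of geometric-type series in $\bL$ and $T$; each such series lies in $\calM[T]_{\loc}$ by the very definition of that ring. Applying this to both $\calX^{\le}$ and $\calX^{\vtl}$ gives the claim.

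I expect the main obstacle to be the middle step: making the dictionary precise enough that the generating series genuinely becomes a motivic integral of the rational type. This requires carrying out the passage between the Grassmannian description and the matrix/integral description uniformly in $n$, keeping careful track of how the codimension is read off, and --- in the ideal case --- accommodating the stronger requirement $[L,H]\subseteq H$ rather than merely $[H,H]\subseteq H$. The resolution-of-singularities input and the ensuing change-of-variables bookkeeping, though technical, are by now standard once the integral has been set up.
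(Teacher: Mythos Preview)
The paper does not give its own proof of this statement: Theorem~\ref{rational} is simply quoted from \cite{duSL}, Theorem~5.1(1), and used as a black box. There is therefore nothing in the paper to compare your sketch against.

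That said, your outline is a fair summary of the strategy actually carried out in \cite{duSL}: one expresses the generating series $P_{L,\calX^{*}}(T)$ as a motivic (cone) integral over a definable subassignment, with the integrand and the bookkeeping variable $T$ governed by $t$-orders of finitely many polynomials in the matrix entries, and then appeals to the Denef--Loeser rationality machinery (embedded resolution plus motivic change of variables) to land in $\calM[T]_{\loc}$. Your identification of the delicate point --- making the dictionary between the Grassmannian/constructible description and the integral description uniform in $n$ --- is also accurate; in \cite{duSL} this is handled via their formalism of definable subassignments and cone integrals rather than by working directly with the $B_n$'s. If you intend to flesh this out, you would be reproving a result already in the literature rather than filling a gap in the present paper.
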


\begin{remark} It is stated in~\cite{duSL}, Subsection 2.8, that closed points of an algebraic variety $Y$ correspond to rational points of $Y$ over finite extensions $K$ of $k$. However, in fact closed points of $Y$ correspond to \emph{Galois orbits} on $Y(K)$, as stated above. On account of this, the definition of a constructible class of subalgebras in~\cite{duSL}, Subsection 2.9, should be amended as follows: $\calX$ is a~\emph{constructible class of subalgebras} if there exists a constructible subset $B_{l,n}$ of $X_l$ such that $B_{l,n}(K)=A_{l,n}(\mathcal{X})(K)$ for all finite extensions $K$ of $k$ (here we use the notation of~\cite{duSL}). The author is grateful to one of the referees for pointing out the same error in the original version of the present paper.  
\end{remark}

Now assume $k=\mQ$. For any variety $X$ over $\mQ$, one can choose a model $X'$ of $X$ over $\mZ$ and consider the number of points $n_p(X)$ of the reduction $X'$ modulo $p$, where $p$ is a prime number. Then $n_p(X)$ is well defined (that is, does not depend on the choice of $X'$) for all but finitely many $p$. Let $\Cal P$ be the set of all primes. If $S$ is a ring, denote by $S^{\Cal P'}$ the ring $\prod_{p\in \Cal P} S/ \oplus_{p\in \Cal P} S$. 
(Here, $\prod_{p\in P} S$ is the ring of functions $\Cal P \ra S$, and $\oplus_{p\in \Cal P} S$ 
is the ideal consisting of such functions with finite support.) Then the sequence 
$(n_p)_{p\in \Cal P}$ induces a ring homomorphism $n: \Cal M \ra \mZ^{\Cal P'}$ (see~\cite{duSL}, Subsection 7.6, for more detail).

Setting $n_p(\bL^{-1})=1/p$, 
one can extend $n$ to a map $n: \Cal M_{\loc}[T] \ra \mQ [T]^{\Cal P'}$. 
(To evaluate $n$ at a polynomial with coefficients from $\Cal M_{\loc}$, apply it term-by-term.) 
Sending $(1-\bL^a T^b)^{-1}$ to $(1-p^a T^b)^{-1}$, we may further extend 
$n$ to a ring homomorphism $n: \Cal M[T]_{\loc} \ra \mQ [\![T]\!]^{\Cal P'}$. 
Now let 
$\tilde{n}: \Cal M[T]_{\loc} \ra \prod_{p\in \Cal P} Q[\![T]\!]$ be a lifting of the map $n$ (that is, a set-theoretic map that yields $n$ when composed with the canonical projection), and let $n_p$ be $\tilde{n}$
composed with the projection onto the $p$-component. Then $n_p$ is not determined uniquely, but if $(n'_p)_{p\in \Cal P}$ is another sequence of maps obtained in this way, then for each (fixed) $x\in \Cal M[T]_{\loc}$, 
$n_p (x)=n'_p (x)$ for almost all $p$. 

If $L$ is a Lie algebra of finite rank defined over $\mZ$ then the local zeta functions of $L$ are closely related to the motivic zeta function of $L\otimes_{\mZ} \mQ[\![t]\!]$. The following is effectively a restatement of 
\cite{duSL}, Corollary 7.9, and its analogue for ideal zeta functions.
\begin{thm} \label{mot-local} Let $L$ be a Lie algebra over $\mZ$, of finite rank as a $\mZ$-module. For almost all $p$,
\[ n_p(P_{L\otimes\mQ[\![t]\!],\calX^{*}}(T))=\tzeta_{L\otimes \mZ_p}^{*}(T). \]
\end{thm}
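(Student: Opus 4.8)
The plan is to deduce the theorem from the corresponding result of du Sautoy and Loeser. For $*=\le$ the statement is \cite{duSL}, Corollary 7.9, once one matches the present notation — the Grassmannians $X_n=\Gr(L/t^nL)$, the constructible sets $B_n\subseteq X_n$, and $P_{L\otimes\mQ[\![t]\!],\calX^{\le}}(T)=\sum_n[B_n]T^n$ — with theirs, and bears in mind the amendment of the Remark above (closed points of $B_n$ correspond to Galois orbits on the $B_n(K)$, which is harmless here since, for almost all $p$, $n_p([B_n])$ is just the number of $\mF_p$-rational points of a model of $B_n$ reduced modulo $p$). Expanding the rational function $P_{L\otimes\mQ[\![t]\!],\calX^{\le}}(T)\in\calM[T]_{\loc}$ (Theorem~\ref{rational}) gives $n_p\big(P_{L\otimes\mQ[\![t]\!],\calX^{\le}}(T)\big)=\sum_n n_p([B_n])T^n$, and the rationality of $P_{L\otimes\mQ[\![t]\!],\calX^{\le}}$ and of $\tzeta_{L\otimes\mZ_p}^{\le}$ (\cite{GSS}, Theorem 3.5), with uniform bounds on denominator complexity, reduces the asserted power-series identity to finitely many Taylor coefficients; so it is enough to establish, for each $n$,
\[
n_p([B_n]) \;=\; a_{n,p}^{\le} \qquad \text{for almost all } p,
\]
rationality then upgrading this to a single finite bad set of primes valid for all $n$. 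For almost all $p$ the left-hand side is the number of $\mF_p$-points of $B_n$, and since $B_n(K)=A_n(\calX^{\le})(K)$ for every finite extension $K$ of $\mQ$, this counts the $\mF_p[\![t]\!]$-submodule subalgebras of $(L\otimes\mF_p[\![t]\!])/t^n(L\otimes\mF_p[\![t]\!])$ of $\mF_p$-codimension $n$. On the other side $a_{n,p}^{\le}$ counts subalgebras of index $p^n$ in $L_p:=L\otimes\mZ_p$; every such subalgebra contains $p^nL_p$, so $a_{n,p}^{\le}$ equals the number of subalgebras of index $p^n$ in the finite ring $L_p/p^nL_p$. Both quantities solve the same problem over, respectively, the chain rings $\mF_p[t]/(t^n)$ and $\mZ/p^n\mZ$, and each can be written as an integral of the kind appearing in \cite{GSS}, over a set cut out by the polynomial divisibility conditions expressing that a given set of generators spans a subalgebra. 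Their equality follows from the comparison between motivic and $p$-adic integration — equivalently an Ax--Kochen--Ershov-type transfer between $\mQ_p$ and $\mF_p(\!(t)\!)$ — valid for all but finitely many $p$; this is precisely the machinery \cite{duSL} develops and packages in Corollary 7.9.

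For $*=\vtl$ I would repeat the argument verbatim with ``subalgebra'' replaced by ``ideal'' throughout. Two things need checking: that $\calX^{\vtl}$ fits into the framework of \cite{duSL}, and that nothing in the proof of Corollary 7.9 is special to subalgebras. For the first, the constructible sets $B_n$ (and $B_{l,n}$) for $\calX^{\vtl}$ exist by \cite{duSL}, Lemma 2.12 (recalled before Theorem~\ref{rational}), so $\calX^{\vtl}$ is a constructible class of subalgebras in the amended sense of the Remark. For the second, the condition that a submodule of $L_p$ with generators $x_1,\dots,x_r$ be an ideal — namely that $[x_i,e_j]$ lie in the submodule for every $i$ and every $e_j$ in a fixed basis of $L$ — is, exactly like the subalgebra condition, a finite conjunction of polynomial divisibility conditions on the entries of the generating matrix. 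Hence the same integral representation and the same transfer step apply and yield $n_p\big(P_{L\otimes\mQ[\![t]\!],\calX^{\vtl}}(T)\big)=\tzeta_{L\otimes\mZ_p}^{\vtl}(T)$ for almost all $p$.

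The main obstacle is the identity $n_p([B_n])=a_{n,p}^{*}$ itself: matching the count over $\mF_p[\![t]\!]$ with the count over $\mZ_p$. This is not an elementary fact about an individual $L$ — it rests on the nontrivial comparison theorem underlying \cite{duSL} — and in the present paper the argument amounts to bookkeeping: identifying the correct integrals, verifying that the Grassmannian data $B_n$ and the Galois-orbit correction do not interfere with the point counts, and confirming that passing from subalgebra conditions to the (equally polynomial) ideal conditions changes nothing in the proof of \cite{duSL}, Corollary 7.9. Everything else (the reduction $H\supseteq p^nL_p$, the index/codimension translation, and the uniformity in $n$ coming from rationality) is routine.
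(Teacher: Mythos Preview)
Your proposal is correct and matches the paper's approach: the paper gives no proof beyond the sentence preceding the theorem, which states that it ``is effectively a restatement of \cite{duSL}, Corollary 7.9, and its analogue for ideal zeta functions.'' Your write-up is a reasonable unpacking of exactly that citation --- matching the Grassmannian data, invoking the motivic/$p$-adic comparison of \cite{duSL}, and observing that the ideal condition is just as polynomial as the subalgebra condition so that the same machinery applies --- and nothing more is needed.
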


In addition to motivic zeta functions, we shall need the notion of Euler characteristic for constructible sets over $\mC$ (that is, for constructible subsets of $\mC^m$, $m\in\mZ_{\ge 0}$). Such a concept is defined (in greater generality) by van den Dries~\cite{vdDries}. 
Say that a map $f:A\ra B$ between two constructible sets $A$ and $B$ over $\mC$ is 
\emph{constructible} if
the graph $\{(x,f(x)): x\in A\}$ is constructible.
The next result follows from 
 \cite{vdDries}, Chapter 4, \S 2 (particularly, Section 2.15).

\begin{thm}\label{Euler} There exists a unique integer-valued function $\chi$ on the class of all constructible sets over $\mC$ satisfying the following properties:
\begin{enumerate}[(i)]
\item $\chi(\mC^m)=1$ for all $m$,
\item $\chi(A\cup B)=\chi(A)+\chi(B)$ for disjoint constructible 
$A,B\subseteq \mC^m$,
\item If $f:B\ra A$ is a constructible map between constructible sets $A\subseteq \mC^m$ and $B\subseteq \mC^n$, and $e\in \mN$ is such that $\chi(f^{-1}(\{a\}))=e$ for all $a\in A$, then
$$\chi(B)=e\cdot \chi(A).$$
\end{enumerate}
\end{thm}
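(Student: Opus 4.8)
The plan is to obtain $\chi$ by reducing to a known construction over $\mR$, and then to characterise it by an explicit recursion. For existence, identify $\mC^{m}$ with $\mR^{2m}$ in the usual way; then every constructible subset of $\mC^{m}$ is a semialgebraic subset of $\mR^{2m}$, and every constructible map between such sets becomes a semialgebraic map, its graph being constructible. One may therefore take $\chi$ to be the restriction of the o-minimal Euler characteristic $\chi_{o}$ attached to the o-minimal structure $(\mR,<,+,\cdot)$, as developed in~\cite{vdDries}, Chapter~4, \S2. Recall that $\chi_{o}$ is built from cell decomposition: one partitions a definable set into finitely many cells, each definably homeomorphic to some $\mR^{d}$, declares $\chi_{o}$ of such a cell to be $(-1)^{d}$, and proves that the resulting integer does not depend on the decomposition. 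Two further facts proved there will be used: $\chi_{o}$ is finitely additive on disjoint unions, and it has the \emph{fibration property} --- if $f$ is a definable map all of whose fibres have $\chi_{o}$-value $e$, then $\chi_{o}$ of the source equals $e$ times $\chi_{o}$ of the base --- the latter obtained from an o-minimal trivialisation theorem applied over a suitable finite partition of the base.

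Granting this, properties~(ii) and~(iii) are inherited at once, a constructible map between constructible sets over $\mC$ being in particular a semialgebraic map. For~(i) one has $\chi(\mC^{m})=\chi_{o}(\mR^{2m})=(-1)^{2m}=1$; the only point is that a complex affine space has even real dimension, so the sign is trivial. In particular $\chi(\emptyset)=0$ and $\chi(\mC^{0})=1$.

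For uniqueness, let $\chi'$ be any integer-valued function on the class of constructible sets over $\mC$ satisfying~(i)--(iii). Applying~(ii) to $\emptyset=\emptyset\cup\emptyset$ gives $\chi'(\emptyset)=0$, and~(i) with $m=0$ gives $\chi'(\mC^{0})=1$, so $\chi'$ agrees with $\chi$ on all finite sets by additivity. We show by induction on $d$ that $\chi'$ is determined on every constructible set of dimension at most $d$; the case $d\le 0$ is the finite-set case just treated. Let $X\subseteq\mC^{m}$ be constructible with $\dim X\le d$. Writing $X$ as a finite disjoint union of locally closed sets and using~(ii) together with inclusion--exclusion over irreducible components, we reduce the determination of $\chi'(X)$ to the case where $X$ is an irreducible affine variety of dimension at most $d$; if $\dim X<d$ this is given by the inductive hypothesis, so we may assume $\dim X=d$. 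By Noether normalisation there is a finite surjective morphism $\pi\colon X\to\mathbb{A}^{d}$; partition $\mathbb{A}^{d}$ into constructible pieces $S_{j}$ on which the fibre cardinality $\#\pi^{-1}(a)$ is a constant $n_{j}\ge 1$, the piece $S_{0}$ with $n_{0}$ the generic degree being dense open, so that $S_{0}=\mathbb{A}^{d}\setminus V$ with $\dim V<d$ and $\dim S_{j}<d$ for $j\neq 0$. Property~(iii), applied with $e=n_{j}$, then forces
\[
\chi'(X)=\sum_{j}n_{j}\,\chi'(S_{j})=n_{0}\bigl(1-\chi'(V)\bigr)+\sum_{j\neq 0}n_{j}\,\chi'(S_{j}),
\]
using $\chi'(\mathbb{A}^{d})=1$ from~(i), and every term on the right is determined by the inductive hypothesis. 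Hence $\chi'=\chi$, so $\chi$ is unique.

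The main obstacle is entirely on the existence side: it is the cell-decomposition machinery, the proof that the Euler characteristic built from it is independent of the chosen decomposition, and the fibration property, whose proof needs an o-minimal (Hardt-type) trivialisation theorem. All of this is the content of~\cite{vdDries}, Chapter~4, \S2. By contrast, once Noether normalisation and constructible stratification are available the uniqueness half is comparatively routine.
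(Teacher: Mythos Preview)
The paper offers no proof of its own: the theorem is introduced with the single sentence ``The next result follows from~\cite{vdDries}, Chapter~4, \S2 (particularly, Section~2.15)'' and nothing more. Your existence argument is a correct unpacking of that citation --- identifying $\mC^{m}$ with $\mR^{2m}$, observing that complex-constructible sets and maps become real-semialgebraic, and importing the o-minimal Euler characteristic together with its additivity and fibration (trivialisation) properties. The observation that $\chi_{o}(\mR^{2m})=(-1)^{2m}=1$ is precisely why property~(i) holds, and is the only place where the even real dimension of complex affine space matters.

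Your uniqueness argument, on the other hand, goes beyond what the paper supplies. Rather than deferring to the reference, you give a self-contained induction on complex dimension via Noether normalisation, reducing an irreducible affine $X$ of dimension $d$ to constructible strata of $\mathbb{A}^{d}$ of strictly smaller dimension by applying~(iii) with $e$ equal to a fibre cardinality. This is sound and is a natural, purely complex-algebraic route that avoids having to evaluate $\chi'$ on odd-dimensional real cells, which the axioms as stated do not directly determine. Two small points are worth making explicit: first, Noether normalisation over $\mC$ yields a \emph{surjective} finite morphism, so every fibre is nonempty and each $n_{j}\ge 1$, which you need since~(iii) is stated only for $e\in\mN$; second, the reduction from a general constructible set to irreducible affine varieties via inclusion--exclusion over components is a bit compressed (one should say that distinct components meet in lower dimension, and that a locally closed set differs from its closure by a lower-dimensional boundary), but it is routine to fill in.
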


\begin{remark} The function $\chi$ coincides with the topological Euler-Poincar\'e characteristics.
\end{remark}

The Euler characteristic $\chi(S)$ of a constructible set $S$ is therefore an invariant of the element $[S]$ of the Grothendieck ring $\Cal{M}$. It induces a ring homomorphism $\chi:\Cal{M}\ra \mZ$. We can extend $\chi$ to a ring homomorphism $\chi:\Cal{M}_{\loc}\ra \mZ$ setting $\chi(\bL^{-1})=1$. Further, $\chi$ extends to a homomorphism 
$$\chi: \Cal{M}_{\loc}[\![T]\!] \ra \mZ [\![T]\!] $$
by the obvious rule
$$
\chi \left( \sum_{j=0}^{\infty} A_j T^j \right) = 
\sum_{j=0}^{\infty} \chi(A_j) T^j.
$$  

\section{Reduced zeta functions} \label{main}

From now on we shall mostly work over the field $\mC$ of complex numbers. Let $L$ be a
Lie algebra over $\mC[\![t]\!]$, finitely generated as a $\mC[\![t]\!]$-module.
 We define the \emph{reduced zeta functions} of $L$ by
$$
 R_L^{*}(T)=\chi(P_{L,\Cal{X}^*}(T)), 
$$
where $*$ is either $\le$ or $\vtl$ as usual. We shall refer to $R_L^{\le}(T)$ and $R_L^{\vtl}(T)$ as the \emph{subalgebra} and the \emph{ideal} reduced zeta functions of $L$ respectively. If $L$ is a Lie algebra over a subring $S$ of $\mC[\![t]\!]$, finitely generated as an $S$-module, define the reduced zeta functions of $L$ by 
$$R_L^{*}(T)=R_{L\otimes_S \mC[\![t]\!]}^{*}(T).$$

\begin{remark} This definition has much in common with the concept of topological zeta functions defined by du Sautoy and Loeser in~\cite{duSL}, Section 8. The difference is that in the present approach $T$ is considered as an independent variable and is not replaced by $\bL^{-s}$. Reduced and topological zeta functions encode different kinds of information about a Lie algebra. Apparently, neither of these functions can be derived from the other. Unlike the reduced zeta function (see Theorem~\ref{mult} below), the topological zeta function is not multiplicative with respect to taking direct sums in any generality.
\end{remark}

\begin{prop} The reduced zeta functions $R_L^*(T)$ are rational functions in $T$.
\end{prop}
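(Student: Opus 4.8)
The plan is to combine the rationality of the motivic zeta function (Theorem~\ref{rational}) with the fact that $\chi$ is a ring homomorphism, so that passing from $P_{L,\calX^*}(T)$ to $R_L^*(T)=\chi(P_{L,\calX^*}(T))$ preserves the shape of a rational function. By Theorem~\ref{rational}, $P_{L,\calX^*}(T)$ lies in $\calM[T]_{\loc}$, that is, in the subring of $\calM_{\loc}[\![T]\!]$ generated by $\calM_{\loc}[T]$ together with the series $(1-\bL^a T^b)^{-1}$, $a\in\mZ$, $b\in\mN$. Hence it is enough to understand the image of this subring under the map $\chi:\calM_{\loc}[\![T]\!]\ra\mZ[\![T]\!]$ defined coefficientwise at the end of Section~\ref{motivic}.

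First I would note that this coefficientwise map is a ring homomorphism: it is the standard extension to power-series rings of the ring homomorphism $\chi:\calM_{\loc}\ra\mZ$, and one checks directly that it preserves addition and the Cauchy product. Consequently $\chi$ commutes with any rational expression in the generators of $\calM[T]_{\loc}$, and therefore carries $\calM[T]_{\loc}$ into the subring of $\mZ[\![T]\!]$ generated by the images of those generators.

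Next I would compute these images. Since $\chi(\bL^{-1})=1$ we have $\chi(\bL)=1$, so $\chi$ maps $\calM_{\loc}[T]$ into $\mZ[T]$. For the remaining generators, expand $(1-\bL^a T^b)^{-1}=\sum_{m\ge 0}\bL^{am}T^{bm}$ and apply $\chi$ term by term, obtaining $\sum_{m\ge 0}T^{bm}=(1-T^b)^{-1}$. Thus $R_L^*(T)=\chi(P_{L,\calX^*}(T))$ lies in the subring of $\mZ[\![T]\!]$ generated by $\mZ[T]$ and the elements $(1-T^b)^{-1}$ with $b\in\mN$. Every element of this subring lies in $\mQ(T)$, so $R_L^*(T)$ is a rational function in $T$, as claimed. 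In fact the argument shows a little more: $R_L^*(T)\in\mZ\!\left[T,(1-T^b)^{-1}:b\in\mN\right]$, so a denominator of $R_L^*(T)$ can be taken to be a product of factors of the form $1-T^b$.

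There is no serious obstacle in this argument; the only points requiring a modicum of care are the verification that the coefficientwise extension of $\chi$ to $\calM_{\loc}[\![T]\!]$ is indeed a ring homomorphism (so that it genuinely commutes with the rational expression exhibiting $P_{L,\calX^*}(T)$ as a member of $\calM[T]_{\loc}$) and the correct handling of terms $\bL^a T^b$ with $a<0$ — both of which follow at once from $\chi$ being a ring homomorphism with $\chi(\bL)=1$.
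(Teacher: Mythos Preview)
Your argument is correct and is essentially the same approach as the paper's own proof, which simply says the result follows immediately from Theorem~\ref{rational}; you have merely unpacked why the ring homomorphism $\chi$ carries $\calM[T]_{\loc}$ into rational functions in $T$. The extra observation that a denominator can be taken as a product of factors $1-T^b$ is a harmless bonus.
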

\begin{proof} This follows immediately from Theorem \ref{rational}. \end{proof}

If $L$ is a Lie algebra over $\mZ$ of finite rank and its motivic zeta function is of the form  $P_{L\otimes \mQ[\![t]\!],\Cal{X}^*}(T)=f(\bL,T)$ where $f\in \mZ (X,Y)$, then by Theorem \ref{mot-local} we have  $\tzeta_{L\otimes \mZ_p}(T)=f(p,T)$ for almost all primes $p$. In this case 
\begin{equation} \label{connection}
R_L^{*}(T)=f(1,T), 
\end{equation}
 so the reduced zeta function is obtained by replacing $p$ with $1$ in the expression for $\tzeta_{L\otimes \mZ_p}(T)$, as indicated in 
Section~\ref{reducedover}. 

We now analyse ways to simplify calculation of a reduced zeta function. Assume $L$ is torsion-free. Let $d$ be the dimension of $L$ as a $\mC[\![t]\!]$-module. Fix a $\mC[\![t]\!]$-basis $\{x_1,\ldots,x_d\}$ of $L$.  Using this basis, we shall write elements of $L$ simply as row vectors with entries from $\mC[\![t]\!]$. If $z\in L$, write $f_i(z)$ for the $i$-th coordinate of $z$ in this basis. If $\mbf{n}=(n_1,\ldots,n_d)$ is a $d$-tuple of nonnegative integers, let $\Cal{T}_{\mbf{n}}$ be the set of matrices of the form
\begin{equation} M=\left(\begin{array}{cccc}
 t^{n_1} & a_{12} &\ldots & a_{1d} \\
0 & t^{n_2} & \ldots & a_{2d} \\
\vdots & \vdots & \ddots & \vdots \\ 
0 & 0 & \ldots & t^{n_d} 
\end{array}\right)    \label{normalform} 
\end{equation}
where $a_{ij}\in \mC[\![t]\!]$ is a polynomial in $t$ of degree less than $n_j$  (by convention, the zero polynomial has negative degree). We shall write
\[ a_{ij}(t)=a_{ij}^{(0)}+a_{ij}^{(1)}t+\cdots + a_{ij}^{(n_j-1)} t^{n_j-1}, \]
where $a_{ij}^{(l)}\in \mC$. 
Clearly, $\Cal{T}_{\mbf{n}}$ may be viewed as the set underlying the affine space of dimension $n_2+2n_3+3n_4+\cdots+(d-1)n_d$ over $\mC$. If $M\in \Cal{T}_{\mbf{n}}$, let $\Xi(M)$ be the $\mC[\![t]\!]$-submodule of $L$ spanned by the rows of $M$. 

If $m$ is a nonnegative integer, let $\Cal{N}_m$ be the set of all $d$-tuples $\mbf{n}=(n_1,\ldots,n_d)$ of nonnegative integers such that $n_1+\cdots+n_d=m$. If $H$ is a $\mC[\![t]\!]$-submodule of $\mC$-codimension $m$ in $L$, there is a unique matrix $M\in\bigcup_{\mbf{n}\in \Cal{N}_m} \Cal{T}_{\mbf{n}}$ such that $\Xi(M)=H$. Indeed, every $d\times d$-matrix over $\mC[\![t]\!]$ with a non-zero determinant can be transformed by elementary row operations to a unique matrix of the form \eqref{normalform}. (Here, the elementary row operations are: permuting rows of $M$, adding a row of $M$ to another row
of $M$ with a coefficient from $\mC [\![t]\!]$ and multiplying a row of $M$ by an invertible element of
$\mC [\![t]\!]$.)

Let $v$ be the standard valuation on $\mC[\![t]\!]$; that is, for $a\ne 0$, $v(a)=r$ where $r$ is the greatest integer such that $t^r$ divides $a$. Consider a matrix $M\in \Cal{T}_{\mbf{n}}$ (for some $\mbf{n}$) and an element $z\in L$ written as a row vector as indicated above. Let $m_j$ be the $j$-th row of $M$. Define
recursively for $i=1,\ldots,d$ a condition $\Cal{D}_i (z,M)$ and an element 
$\tau_i (z,M)\in \mC[\![t]\!]$ as follows:
$$
\Cal{D}_i (z,M) \text{ holds if and only if } \; 
v \left( f_i (z)-\sum_{j=1}^{i-1} \tau_j (z,M) f_i(m_j) \right)\ge n_i,
$$
$$\tau_i (z,M)=f_i \left(z-\sum_{j=1}^{i-1} \tau_j (z,M)m_j \right)t^{-n_i}.$$
Note that $\tau_i (z,M)$ is well defined if $\Cal{D}_1 (z,M),\ldots,\Cal{D}_i (z,M)$ hold. Say that $\Cal{D} (z,M)$ holds if and only if $\Cal{D}_1 (z,M),\ldots,\Cal{D}_d (z,M)$ all hold.
It is easy to see that $z$ is in the $\mC[\![t]\!]$-span of the 
rows of $M$ if and only if $\Cal{D} (z,M)$ holds.
 Hence $\Xi(M)$ is an ideal of $L$ if and only if $\Cal{D} ([m_i,x_j],M)$ holds for all  
$i,j \in \{1,\ldots,d\}$; and $\Xi(M)$ is a subalgebra of $L$ if and only if $\Cal{D}([m_i,m_j],M)$ holds for all $i,j \in \{1,\ldots,d\}$.
 
Therefore, for each choice of $\mbf{n}$, the condition $\Xi(M)\in \Cal{X}^*$ may be written as a finite Boolean combination of polynomial equations in the coordinates $a_{ij}^{(r)}$. That is, there is a constructible subset $\Cal{F}_{\mbf{n}}^*$ of $\Cal{T}_{\mbf{n}}$ whose elements are precisely the matrices $M$ such that $\Xi(M)$ is in the class $\calX^*(\mC)$. Then
\begin{equation}
 R_L^*(T)=\sum_{\mbf{n}\in \mZ_{\ge 0}^{d}} \chi(\Cal{F}_{\mbf{n}}^*)
T^{n_1+\cdots+n_d}. \label{matform}
\end{equation}   

Call a pair $(i,j)$ (where $1\le i<j \le d$) \emph{removable} if there exist integers $l_1,l_2,\ldots,l_d$ such that 
\begin{enumerate}[(i)]
\item For all $z\in \mC^*= \mC \setminus \{0\}$, the map given by $x_r\mapsto z^{l_r}x_r$ ($r=1,\ldots,d$) is an automorphism of $L$, 
\item $l_i\ne l_j$.  
\end{enumerate}
If $C$ is a set of pairs $(i,j)$  (satisfying $1\le i<j \le d$), let $\Cal{E}_{\mbf{n},C}$ be the subvariety of $\Cal{T}_{\mbf{n}}$ given by the following condition:
$ a_{ij}=0$ whenever $ (i,j)\in C. $

\begin{thm} \label{remove} 
Let $L$ be a torsion-free Lie algebra over $\mC[\![t]\!]$. Let 
$\{x_1,\ldots,x_d\}$ be a basis of $L$. 
Suppose $C$ is a set of removable pairs $(i,j)$. Then 
$$
R_L^*(T)=\sum_{\mbf{n}\in\mZ_{\ge 0}^{d}} 
\chi(\Cal{E}_{\mbf{n},C}\cap\Cal{F}_{\mbf{n}}^*) T^{n_1+\cdots+n_d}. 
$$
That is, one may assume that $a_{ij}=0$ for each removable pair $(i,j)$ when calculating the reduced zeta function. 
\end{thm}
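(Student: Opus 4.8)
The plan is to exploit the torus action defined by a removable pair to show that the Euler characteristic of $\Cal{F}_{\mbf{n}}^*$ equals that of its slice $\Cal{E}_{\mbf{n},C}\cap\Cal{F}_{\mbf{n}}^*$, one removable pair at a time. By induction on $|C|$, it suffices to treat the case $C=\{(i,j)\}$ consisting of a single removable pair, with integers $l_1,\ldots,l_d$ as in the definition. The key observation is that the automorphism $\alpha_z\colon x_r\mapsto z^{l_r}x_r$ of $L$ induces, for each $\mbf{n}$, an action on the set of $\mC[\![t]\!]$-submodules of fixed codimension, hence (after renormalising back to the normal form \eqref{normalform}) a constructible action of the multiplicative group $\mC^*$ on $\Cal{T}_{\mbf{n}}$ that preserves $\Cal{F}_{\mbf{n}}^*$, since $\alpha_z$ carries subalgebras to subalgebras and ideals to ideals. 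I would first work out exactly how $\alpha_z$ acts on the coordinates $a_{pq}^{(r)}$: conjugating $M$ by $\mathrm{diag}(z^{l_1},\ldots,z^{l_d})$ scales the $(p,q)$ entry by $z^{l_q-l_p}$, and since $t$ is fixed this multiplies each coordinate $a_{pq}^{(r)}$ by $z^{l_q-l_p}$ (the diagonal entries $t^{n_q}$ are untouched, so the result is already in the form \eqref{normalform} with the same $\mbf{n}$ — no further row reduction is needed). In particular the $(i,j)$-coordinates $a_{ij}^{(r)}$ are scaled by $z^{l_j-l_i}$ with $l_j-l_i\ne 0$.

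Next I would apply property (iii) of the Euler characteristic (Theorem~\ref{Euler}) to the constructible map $\pi\colon \Cal{F}_{\mbf{n}}^*\to \Cal{E}_{\mbf{n},C}\cap\Cal{F}_{\mbf{n}}^*$ that forgets the $(i,j)$-coordinates, i.e.\ sets $a_{ij}^{(0)}=\cdots=a_{ij}^{(n_j-1)}=0$. One must check two things: that $\pi$ genuinely lands in $\Cal{F}_{\mbf{n}}^*$ (i.e.\ killing the $(i,j)$-entries of a matrix $M$ with $\Xi(M)\in\calX^*$ again yields a matrix whose span is in $\calX^*$), and that every fibre $\pi^{-1}(\{M'\})$ has the same Euler characteristic $e$. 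For the first point, the fibre over $M'\in \Cal{E}_{\mbf{n},C}\cap\Cal{F}_{\mbf{n}}^*$ consists of those $M\in\Cal{T}_{\mbf{n}}$ agreeing with $M'$ outside the $(i,j)$-slot and with $\Xi(M)\in\calX^*$; the $\mC^*$-action scales the free coordinates $a_{ij}^{(r)}$ with a nontrivial weight and fixes $M'$ (its $(i,j)$-entry is zero, and all other coordinates of $M'$ would need to be fixed — this is the subtle part, see below), so the fibre is a $\mC^*$-stable constructible subset of an affine space $\mA^{n_j}$ on which $\mC^*$ acts linearly with all weights equal and nonzero. Such a set is a cone through the origin minus possibly the origin, or contains it; in any case a standard computation (contract toward $0$ along the action and use (i) and (ii)) shows any $\mC^*$-invariant constructible cone $S\subseteq\mA^m$ with $0\in\ol S$ has $\chi(S)=\chi(\{0\})=1$ if $0\in S$ and $\chi(S)=0$ if $0\notin S$. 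I would need to confirm that $M'$ itself is always in the fibre, i.e.\ that $0$ is in $S$; this holds precisely because $\Xi(M')\in\calX^*$ by hypothesis, so $e=1$ uniformly. Then (iii) gives $\chi(\Cal{F}_{\mbf{n}}^*)=1\cdot\chi(\Cal{E}_{\mbf{n},C}\cap\Cal{F}_{\mbf{n}}^*)$, and summing over $\mbf{n}$ with weight $T^{n_1+\cdots+n_d}$ against \eqref{matform} yields the claim; the induction on $|C|$ then removes the pairs in $C$ successively (at each stage the earlier-killed coordinates stay zero because they are not the $(i,j)$-slot being contracted, so the previously-obtained slices are preserved).

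The main obstacle I anticipate is the interaction between \emph{different} removable pairs inside a single fibre: when I contract the $(i,j)$-coordinates, I am implicitly claiming the fibre $\pi^{-1}(\{M'\})$ is stable under an action that moves \emph{only} those coordinates, which requires that the $\mC^*$-action coming from $(l_1,\ldots,l_d)$ fixes every other coordinate of $M$ — but it does not in general: it scales every off-diagonal coordinate $a_{pq}^{(r)}$ by $z^{l_q-l_p}$, which is nontrivial for many $(p,q)$. So the naive fibration argument must be replaced by a more careful one: rather than fibring $\Cal{F}_{\mbf{n}}^*$ over the whole slice at once, I should consider the graph of the action and stratify, or equivalently use the action of $\mC^*$ directly on $\Cal{F}_{\mbf{n}}^*$ via Theorem~\ref{Euler}(iii) applied to the quotient map, noting that $\chi$ is invariant under the $\mC^*$-action and that the fixed locus of the action restricted to the $(i,j)$-direction — after a change of variables rescaling so that only the $(i,j)$-weight survives, which is possible because we may compose $\alpha_z$ with the inverse of a power of a \emph{different} torus element, or more simply restrict to the sub-torus — is exactly $\Cal{E}_{\mbf{n},C}\cap\Cal{F}_{\mbf{n}}^*$. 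Concretely: I would use the fact that $\chi(S)=\chi(S^{\mC^*})$ for a constructible $\mC^*$-action on a constructible set $S$ (a consequence of (iii), the quotient $S\setminus S^{\mC^*}\to (S\setminus S^{\mC^*})/\mC^*$ having all fibres $\cong\mC^*$ with $\chi(\mC^*)=0$), applied to the one-parameter group $z\mapsto \alpha_z$; but since $\alpha_z$ may have a large fixed locus (any $a_{pq}^{(r)}$ with $l_p=l_q$ is fixed), one arranges, by choosing the exponents appropriately or by iterating over a spanning set of removable pairs, that the relevant fixed locus collapses exactly the coordinates indexed by $C$. Making this bookkeeping precise — choosing a single $\mC^*$ (or a torus) whose fixed locus in $\Cal{T}_{\mbf{n}}$ is $\Cal{E}_{\mbf{n},C}$ and which still acts on $\Cal{F}_{\mbf{n}}^*$ — is the crux, and it is where the hypothesis that $C$ consists of removable pairs (each furnishing a genuine automorphism, so that $\calX^*$ is preserved) is used in full.
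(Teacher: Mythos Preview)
Your overall strategy matches the paper's: induct on $|C|$, and for the inductive step use the $\mC^*$-action $g_z\colon a_{pq}\mapsto z^{l_q-l_p}a_{pq}$ on $\Cal T_{\mbf n}$ coming from the automorphism $\phi_z$ of $L$, together with $\chi(\mC^*)=0$. You also correctly identify the obstacle --- $g_z$ moves \emph{every} entry with $l_p\ne l_q$, so neither the fibration $\pi$ over the slice nor a naive fixed-locus comparison isolates the $(i,j)$-slot --- but you stop short of resolving it. Your fibration $\pi$ (set $a_{ij}=0$) is not even well-defined as stated: there is no a~priori reason that zeroing one entry of $M\in\Cal F_{\mbf n}^*$ keeps $\Xi(M)$ in $\calX^*$, so you cannot speak of fibres over the slice until you have essentially proved the result. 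Your fixed-locus principle $\chi(S)=\chi(S^{\mC^*})$ is true for linear $\mC^*$-actions, but you have not proved it (the quotient $(S\setminus S^{\mC^*})/\mC^*$ is not obviously a constructible set to which Theorem~\ref{Euler}(iii) applies), and its fixed locus is $\Cal E_{\mbf n,C'}\cap\Cal F_{\mbf n}^*$ with $C'=\{(p,q):l_p\ne l_q\}$, which overshoots the slice you want --- so the induction must be reorganised, which you acknowledge but do not carry out.

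The paper's resolution is a single clean device that replaces both of your attempts. Work inside the $g_z$-stable set $\Cal E_{\mbf n,D}\cap\Cal F_{\mbf n}^*$ (zero entries stay zero under scaling), split off the locus $\{a_{ij}=0\}$, and stratify its complement by $w=v(a_{ij})$. On each stratum $\Cal S_{\mbf n,i,j,w}\cap\Cal E_{\mbf n,D}\cap\Cal F_{\mbf n}^*$, define a constructible map \emph{to} $\mC^*$ by $f\colon M\mapsto a_{ij}^{(w)}$. Since $g_z$ preserves this stratum and satisfies $f(g_z(M))=z^{l_j-l_i}f(M)$ with $l_j-l_i\ne 0$, the action carries each fibre of $f$ bijectively onto every other; hence all fibres have the same Euler characteristic $e$, and Theorem~\ref{Euler}(iii) gives $\chi(\text{stratum})=e\cdot\chi(\mC^*)=0$ --- no quotient by $\mC^*$ is needed. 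Summing over $w$ yields $\chi(\Cal E_{\mbf n,D}\cap\Cal F_{\mbf n}^*)=\chi(\Cal E_{\mbf n,C}\cap\Cal F_{\mbf n}^*)$, which is the inductive step. This ``map to $\mC^*$ via the leading nonzero coefficient'' is exactly the missing bookkeeping you flagged; incidentally, iterating it over all nonzero-weight coordinates is also how one would make your $\chi(S)=\chi(S^{\mC^*})$ rigorous in this setting.
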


\begin{proof} We use induction on the size of $C$. The base case $C=\varnothing$ follows from \eqref{matform}.
Assuming $C$ is non-empty, pick a pair $(i,j)\in C$,
 and let $D=C\setminus \{(i,j)\}$. By the inductive hypothesis,
\begin{equation}
R_L^*(T)=\sum_{\mbf{n}\in\mZ_{\ge 0}^{d}} 
\chi(\Cal{E}_{\mbf{n},D}\cap\Cal{F}_{\mbf{n}}^*) T^{n_1+\cdots+n_d} \label{indhyp} 
\end{equation}
Let $\mbf{n}$ be a $d$-tuple of integers.
If $w$ is an integer satisfying $0\le w<n_j$, let $\Cal{S}_{\mbf{n},i,j,w}$ be the subvariety of $\Cal{T}_{\mbf{n}}$ defined by the condition $v(a_{ij})=w$ (that is, $a_{ij}^{(r)}=0$ for all $r<w$ and $a_{ij}^{(w)}\ne 0)$. By the additivity of Euler characteristic on constructible sets,
\[ \chi(\Cal{E}_{\mbf{n},D}\cap\Cal{F}_{\mbf{n}}^*)=\chi(\Cal{E}_{\mbf{n},C}\cap\Cal{F}_{\mbf{n}}^*)+
\sum_{w=0}^{n_j-1} \chi(\Cal{S}_{\mbf{n},i,j,w}\cap \Cal{E}_{\mbf{n},D}\cap\Cal{F}_{\mbf{n}}^*). \]   
Thus it is enough to prove that 
$\chi(\Cal{S}_{\mbf{n},i,j,w}\cap \Cal{E}_{\mbf{n},D}\cap\Cal{F}_{\mbf{n}}^*)=0$ whenever $0\le w<n_j$. Indeed, then the result would follow from \eqref{indhyp}.

Let  
$ f:\Cal{S}_{\mbf{n},i,j,w}\cap \Cal{E}_{\mbf{n},D}\cap\Cal{F}_{\mbf{n}}^* \ra \mC^* $
be the map defined by
$ M \mapsto a_{ij}^{(w)}. $
 Since $(i,j)$ is a removable pair, there are integers
 $l_1,\ldots,l_d$ such that $l_i\ne l_j$ and, for all $z\in\mC^*$, $x_r\mapsto z^{l_r}x_r$ ($r=1,\ldots,d$) gives rise to an automorphism $\phi_z$ of $L$. If
$M\in \Cal{S}_{\mbf{n},i,j,w}\cap \Cal{E}_{\mbf{n},D}$ and $z\in\mC^*$,  
let 
\begin{equation*} g_z(M)=\left(\begin{array}{cccc}
 t^{n_1} & z^{l_2-l_1} a_{12} & \ldots & z^{l_d-l_1} a_{1d} \\
0 & t^{n_2} & \ldots & z^{l_d-l_2} a_{2d} \\
\vdots & \vdots & \ddots & \vdots \\ 
0 & 0  & \ldots & t^{n_d} 
\end{array}\right), 
\end{equation*}
so $g_z(M)$ is obtained from $M$ by multiplying the $r$-th column by $z^{l_r}$ and the 
$r$-th row by $z^{-l_r}$, for $r=1,2,\ldots,d$. Then 
$ \Xi(g_z(M))=\phi_z(\Xi(M)). $
Since ideals and subalgebras are preserved by automorphisms of $L$, we conclude that $g_z$ is an automorphism of 
$\Cal{S}_{\mbf{n},i,j,w}\cap \Cal{E}_{\mbf{n},D}\cap\Cal{F}_{\mbf{n}}^*$. 
Note that $f(g_z(M))=z^{l_j-l_i}f(M)$. Thus, for any $a\in \mC^{*}$,
 $g_z |_{f^{-1}(\{a\})}$ is a constructible bijection onto 
$f^{-1}(\{z^{l_j-l_i}a\})$. Since $z^{l_j-l_i}$ runs through all non-zero complex numbers as $z$ runs through $\mC^{*}$, by Theorem \ref{Euler}, 
$\chi(f^{-1}(\{a\}))$ is a constant independent of $a\in \mC^*$, $e$ say. 
By Theorem \ref{Euler} again, 
$$
\chi(\Cal{S}_{\mbf{n},i,j,w}\cap \Cal{E}_{\mbf{n},D}\cap\Cal{F}_{\mbf{n}}^*)=
 e\cdot\chi(\mC^*)=0
$$
because $\chi(\mC^*)=0$. This completes the proof of the theorem. \end{proof}   

Unlike usual zeta functions, reduced zeta functions are multiplicative with respect to direct sums in many cases. 

\begin{thm} \label{mult} Let $L$ and $N$ be finitely generated (as $\mC[\![t]\!]$-modules) torsion-free Lie algebras over $\mC[\![t]\!]$. Then
$$R_{L\oplus N}^{\vtl} (T) = R_L^{\vtl}(T) R_N^{\vtl} (T).$$
Suppose further that there is a basis $\{x_1,\ldots,x_d\}$ of $L$ such that, 
for all $j\in [1,d]$, there exist integers $l_{j1},\ldots,l_{jd}$ with the property that
 $l_{jj}\ne 0$ and, for all $z\in \mC^*$, the map $x_r\mapsto z^{l_{jr}} x_r$ 
induces an automorphism of $L$. Then 
$$R_{L\oplus N}^{\le} (T) = R_L^{\le}(T) R_N^{\le} (T).$$
\end{thm}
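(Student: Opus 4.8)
The plan is to reduce both multiplicativity statements to a combinatorial factorisation of the matrix-form expression \eqref{matform}, using a suitably chosen basis of $L \oplus N$. Fix bases $\{x_1,\dots,x_d\}$ of $L$ and $\{y_1,\dots,y_e\}$ of $N$, and take $\{x_1,\dots,x_d,y_1,\dots,y_e\}$ as a basis of $L\oplus N$. The key structural observation is that because $[x_i,y_j]=0$ for all $i,j$, the conditions $\mathcal{D}(\cdot,M)$ governing whether a module $\Xi(M)$ is an ideal, or a subalgebra, largely decouple: a submodule $H$ of $L\oplus N$ of the shape $\Xi(M)$ will tend to split as a block-triangular combination of a submodule of (an extension of) $L$ and one of $N$.

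First I would handle the ideal case. For ideals, $\Xi(M)$ is an ideal of $L\oplus N$ iff $\mathcal{D}([m_a, x_j],M)$ and $\mathcal{D}([m_a,y_j],M)$ hold for all rows $m_a$ of $M$ and all basis vectors. Writing $M$ in block form with a $d\times d$ block $M_{11}$ (corresponding to the $x$'s), an $e\times e$ block $M_{22}$, and an off-diagonal $d\times e$ block $M_{12}$ (the block below it being zero since we may order coordinates with $x$'s first), I would argue that the bracket conditions involving the $x_j$ only see $M_{11}$ and the bracket conditions involving the $y_j$ only see $M_{22}$, while the block $M_{12}$ is entirely unconstrained. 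This is where the direct-sum hypothesis is used crucially: $[x_i,y_j]=0$ kills all the cross terms. Hence $\mathcal{F}^{\vtl}_{\mathbf{n}}$ for $L\oplus N$ decomposes, up to the free affine factor coming from $M_{12}$, as a product $\mathcal{F}^{\vtl}_{\mathbf{n}_1}(L)\times \mathcal{F}^{\vtl}_{\mathbf{n}_2}(N)\times \mathbb{A}^c$, where $\mathbf{n}=(\mathbf{n}_1,\mathbf{n}_2)$ and $c$ depends only on $\mathbf{n}$. Since $\chi(\mathbb{A}^c)=1$ and $\chi$ is multiplicative on products, summing over $\mathbf{n}$ and recognising the Cauchy product gives $R^{\vtl}_{L\oplus N}(T)=R^{\vtl}_L(T)R^{\vtl}_N(T)$.

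The subalgebra case is more delicate, and I expect it to be the main obstacle. For subalgebras the condition is $\mathcal{D}([m_a,m_b],M)$ for all pairs of rows, and now $[m_a,m_b]$ mixes the $L$-part and $N$-part of the two rows in a genuinely nonlinear way, so the block $M_{12}$ is no longer free: a row whose $x$-part lies in $L$ and whose $y$-part is some vector in $N$ can bracket with another such row to produce something that must lie back in $\Xi(M)$. This is precisely why the extra hypothesis — that $L$ admits, for each $j$, a diagonal torus action scaling $x_j$ nontrivially — is imposed. The idea is to mimic the removability argument of Theorem~\ref{remove}: for each column index $j$ corresponding to a basis vector $x_j$ of $L$, the pair $(j, d+r)$ (an $x$-column crossed against a $y$-row, or vice versa — one must set this up carefully so the scaling is by $z^{l_{jj}}\neq 1$) behaves like a removable pair for $L\oplus N$, because the automorphism $x_r\mapsto z^{l_{jr}}x_r$ of $L$ extends by the identity on $N$ to an automorphism of $L\oplus N$. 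Applying Theorem~\ref{remove} (or rerunning its proof) lets us set those entries of $M_{12}$ to zero without changing $R^{\le}$, after which the remaining entries of $M_{12}$ really are unconstrained and the same product decomposition and Cauchy-product argument as in the ideal case goes through.

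The technical heart, then, is twofold: (a) verifying the block-decoupling of the $\mathcal{D}$-conditions under $[L,N]=0$, which is a direct if slightly tedious unwinding of the recursive definitions of $\tau_i$ and $\mathcal{D}_i$ along the block-triangular shape; and (b) correctly identifying which cross-entries are removable and checking that the torus actions on $L$ extend to $L\oplus N$ and act with a nontrivial weight on the relevant coordinate, so that Theorem~\ref{remove} applies. I would also need to check the bookkeeping of the index $\mathbf{n}\mapsto(\mathbf{n}_1,\mathbf{n}_2)$ and that $n_1+\cdots+n_{d+e}=(n_1+\cdots+n_d)+(n_{d+1}+\cdots+n_{d+e})$ so that the degrees add and the generating-function identity is genuinely a product. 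The hypothesis "$l_{jj}\neq 0$" is exactly what guarantees a nontrivial weight, so no removable pair is lost; without it, cross-terms in $M_{12}$ could survive and multiplicativity could fail, consistent with the known non-multiplicativity of ordinary subalgebra zeta functions.
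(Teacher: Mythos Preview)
Your overall plan matches the paper's: for ideals, fibre the constructible set $\Cal F^{\vtl}_{(\mbf n,\mbf m)}$ over the pair $(A,B)$ and show each fibre has Euler characteristic $1$; for subalgebras, use the diagonal automorphisms of $L$ (extended by the identity on $N$) to render every cross-entry removable and then invoke Theorem~\ref{remove}.

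There is, however, a genuine error in your ideal-case argument. The block $M_{12}=C$ is \emph{not} ``entirely unconstrained''. Writing the first $d$ rows of $M$ as $(a_i,c_i)$, the bracket $[(a_i,c_i),y_j]=[c_i,y_j]_N$ must lie in $\Xi(M)\cap N=\Xi(B)$, and this is a nontrivial condition on $C$ whenever $N$ is non-abelian. (There may also be constraints from the $x$-brackets: $[(a_i,c_i),x_j]=[a_i,x_j]_L$ is pure in $L$, but forcing it into $\Xi(M)$, not just $\Xi(A)$, imposes further conditions on $C$.) So the decomposition $\Cal F^{\vtl}_{\mbf n_1}(L)\times\Cal F^{\vtl}_{\mbf n_2}(N)\times\mathbb A^{c}$ that you assert is false in general. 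The paper's repair is exactly the observation you are missing: for fixed $A\in\Cal F^{\vtl}_{\mbf n,L}$ and $B\in\Cal F^{\vtl}_{\mbf m,N}$, every such constraint is $\mC$-\emph{linear} in the entries of $C$, so the fibre $\Cal Y^{\vtl}_{\mbf n,\mbf m}(A,B)$ is a vector subspace of $\scr M_{\mbf m}$ and hence has $\chi=1$. One then concludes via the fibration property (Theorem~\ref{Euler}(iii)), not via a global product.

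For subalgebras your idea is right but the bookkeeping is off. You do not remove \emph{some} cross-entries and leave ``the remaining entries of $M_{12}$ unconstrained'': the point is that \emph{every} pair $(i,d+r)$ with $1\le i\le d$ and $1\le r\le e$ is removable, using the hypothesis with index $j=i$ (so $x_i\mapsto z^{l_{ii}}x_i$ with $l_{ii}\ne 0$) extended by the identity on $N$ (so $y_r\mapsto z^{0}y_r$). After Theorem~\ref{remove} the entire block $C$ is set to zero, and then $\Cal E_{(\mbf n,\mbf m),C}\cap\Cal F^{\le}_{(\mbf n,\mbf m)}$ is literally $\Cal F^{\le}_{\mbf n,L}\times\Cal F^{\le}_{\mbf m,N}$, giving the product formula directly.
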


\begin{remark} The condition on the basis of $L$, while true for many examples considered in the literature, is quite restrictive: loosely speaking, it fails for a `random' Lie algebra. It is an interesting question whether the subalgebra reduced zeta function is multiplicative in general.
\end{remark}

\begin{proof} We shall use notation described above adding subscripts $L$, $N$ and $L\oplus N$ to denote the Lie algebra we refer to. Choose bases $\{x_1,\ldots,x_d\}$ for $L$ and $\{y_1,\ldots,y_e\}$ for $N$. As above, elements of $L$ and $N$ can be represented by row vectors of length $d$ and $e$ respectively. Consider first the case of ideals. By \eqref{matform}, it suffices to prove that
\begin{equation} 
\chi(\Cal{F}^{\vtl}_{(\mbf{n},\mbf{m}),L\oplus N}) 
= \chi(\Cal{F}^{\vtl}_{\mbf{n},L})
\chi(\Cal{F}^{\vtl}_{\mbf{m},N})  \label{prod1}
\end{equation} 
for any $d$-tuple $\mbf{n}$ and any $e$-tuple $\mbf{m}$. 
Fix $\mbf{n}$ and $\mbf{m}$. 
Let $\scr M_{\mbf m}$ be the set of all $d\times e$ matrices $C=(c_{ij})$ with entries from 
$\mC[\![t]\!]$ such that $\deg(c_{ij})<m_j$ for all $i,j$.
Let $A\in \Cal{T}_{\mbf{n}}$ and $B\in \Cal{T}_{\mbf{m}}$. If 
$C\in \scr{M}_{\mbf m}$,
write 
$$M=M(A,B,C)=
\begin{pmatrix} A & C \\ 0 & B \end{pmatrix}.$$
Note that if $M\in \Cal{F}^{\vtl}_{(\mbf{n},\mbf{m}),L\oplus N}$, then $A\in \Cal{F}^{\vtl}_{\mbf{n}}$ and 
$B\in \Cal{F}^{\vtl}_{\mbf{m}}$. Let
$$
\Cal{Y}^{\vtl}_{\mbf{n},\mbf{m}}(A,B)= 
\{C\in \scr{M}_{\mbf m} : 
M(A,B,C) \in \Cal{F}^{\vtl}_{(\mbf{n},\mbf{m}),L\oplus N} \}.
$$
Then, by Theorem \ref{Euler}, $\eqref{prod1}$ holds provided, for all $A\in \Cal{F}^{\vtl}_{\mbf{n}}$, $B\in \Cal{F}^{\vtl}_{\mbf{m}}$,
\begin{equation}
\chi(\Cal{Y}^{\vtl}_{\mbf{n},\mbf{m}}(A,B)) = 1. \label{prod2}
\end{equation} 

Fix  $A\in \Cal{F}^{\vtl}_{\mbf{n}}$ and $B\in \Cal{F}^{\vtl}_{\mbf{m}}$.
Writing $c_i$ for the $i$-th row of $C$, observe that $C\in \Cal{Y}^{\vtl}_{\mbf{n},\mbf{m}}(A,B)$ 
if and only if $[c_i,y_j]_{N}$ is in the span of the rows of $B$ whenever 
$1\le i\le d$, $1\le j\le e$. Thus $\Cal{Y}^{\vtl}_{\mbf{n},\mbf{m}}(A,B)$ is a vector space over $\mC$, so \eqref{prod2} holds.

Finally consider the case of subalgebras. We may assume that the basis 
is as in the hypothesis. Let 
$C=\{(i,j)\in \mN^2: 1\le i\le d<j\le d+e\}$. By the hypothesis, all pairs in $C$ are removable with respect to $L\oplus N$ (to obtain a required automorphism of $L\oplus N$, combine the identity automorphism of $N$ with one of the automorphisms of $L$ from the hypothesis). Hence by Theorem \ref{remove}
\begin{eqnarray*} R^{\le}_{L\oplus N} (T) & = & 
 \sum_{\mbf{n}\in\mZ_{\ge 0}^{d}}\sum_{\mbf{m}\in\mZ_{\ge 0}^{e}} 
\chi(\Cal{E}_{(\mbf{n},\mbf{m}),C,L\oplus N}\cap
\Cal{F}_{(\mbf{n},\mbf{m}),L\oplus N}^{\le})
T^{n_1+\cdots+n_d+m_1+\cdots+m_e}
 \\
& = & \sum_{\mbf{n}\in\mZ_{\ge 0}^{d}}\sum_{\mbf{m}\in\mZ_{\ge 0}^{e}}
\chi(\Cal{F}^{\le}_{\mbf{n},L}\times \Cal{F}^{\le}_{\mbf{m},N})
T^{n_1+\cdots+n_d+m_1+\cdots+m_e} \\
& = & R_L^{\le} (T) R_N^{\le} (T). 
\qquad\qquad\quad\qquad\qquad\qquad\qquad\qquad\qquad\qquad\qquad \qed
\end{eqnarray*}\renewcommand{\qed}{}\end{proof}

\section{Lie algebras with a nice and simple basis} \label{nice}

Let $L$ be a torsion-free $d$-dimensional Lie algebra over $\mC[\![t]\!]$, as above. Let $B=\{x_1,\ldots,x_d\}$ be a basis of $L$.  Let $\Cal{D}_B^*$ be the set of all $d$-tuples $\mbf{n}$ such that the rows of the diagonal matrix  
\begin{equation*} D_{\mbf{n}}=\left(\begin{array}{cccc}
 t^{n_1} & 0  &\ldots & 0 \\
0 & t^{n_2} & \ldots & 0 \\
\vdots & \vdots & \ddots & \vdots \\ 
0 & 0  & \ldots & t^{n_d} 
\end{array}\right)   
\end{equation*}
span an ideal (if $*=\vtl$) or a subalgebra (if $*=\le$). Call a basis $B$ \emph{nice} (with respect to $*$) if 
\begin{equation}
R_L^*(T)=\sum_{\mbf{n}\in\Cal{D}_B^*} T^{n_1+n_2+\cdots+n_d} \label{niceform}
\end{equation}
By Theorem \ref{remove}, $B$ is nice whenever all the pairs $(i,j)$ ($1\le i<j\le d$) are removable. Call $B$ \emph{simple} if for all $i,j$ there exist 
$l\in\{1,\ldots,d\}$ and $a\in \mC[\![t]\!]$ such that $[x_i,x_j]=a x_l$ and either $a=0$ or $v(a)=0$.
If $B$ is simple, consider the polyhedral cones
\begin{eqnarray} 
\Cal{C}_B^{\vtl}& = & \{\mbf{y}\in \mR_{\ge 0}^d: y_l\le y_i\; \text{and}\; y_l\le y_j\; \text{whenever} \;
[x_i,x_j]=ax_l,\; a\ne 0 \}, \label{coneideal} \\
\Cal{C}_B^{\le}& = & \{\mbf{y}\in \mR_{\ge 0}^d: y_l\le y_i+y_j \; \text{whenever} \;
[x_i,x_j]=ax_l,\; a\ne 0 \}.  \label{conesubalgebra}
\end{eqnarray} 

\begin{prop} \label{calc} Let $L$ be a torsion-free $d$-dimensional Lie algebra over 
$\mC[\![t]\!]$. Assume $L$ has a nice (with respect to $*$) and simple basis 
$\{x_1,\ldots,x_d\}$. Let $\Cal{C}_B^*$ be as above. Then
\begin{equation}
R_L^*(T)=\sum_{\mbf{n}\in\Cal{C}_B^*\cap\mZ^d} T^{n_1+n_2+\cdots+n_d}. \label{sumcone}
\end{equation} 
\end{prop}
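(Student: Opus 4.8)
The plan is to reduce the proposition to a purely combinatorial identity using the hypothesis that $B$ is nice. Since $B$ is nice with respect to $*$, equation \eqref{niceform} gives $R_L^*(T)=\sum_{\mbf{n}\in\Cal{D}_B^*}T^{n_1+\cdots+n_d}$, so it suffices to prove that $\Cal{D}_B^*=\Cal{C}_B^*\cap\mZ^d$. Because $\Cal{C}_B^*$ is contained in $\mR_{\ge 0}^d$, this intersection coincides with $\Cal{C}_B^*\cap\mZ_{\ge 0}^d$, which is exactly the index set on the right-hand side of \eqref{sumcone}; hence the proposition will follow.

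To compute $\Cal{D}_B^*$ I would analyse directly when the $\mC[\![t]\!]$-module $H_{\mbf{n}}:=\Xi(D_{\mbf{n}})=\bigoplus_{i=1}^{d}t^{n_i}\mC[\![t]\!]\,x_i$ spanned by the rows of $D_{\mbf{n}}$ is a subalgebra (if $*=\le$), resp. an ideal (if $*=\vtl$). Since $H_{\mbf{n}}$ is generated as a $\mC[\![t]\!]$-module by $t^{n_1}x_1,\ldots,t^{n_d}x_d$, and $L$ is generated as a $\mC[\![t]\!]$-module by $x_1,\ldots,x_d$, and since the bracket is $\mC[\![t]\!]$-bilinear, $H_{\mbf{n}}$ is a subalgebra if and only if $[t^{n_i}x_i,t^{n_j}x_j]\in H_{\mbf{n}}$ for all $i,j$, and it is an ideal if and only if $[t^{n_i}x_i,x_j]\in H_{\mbf{n}}$ for all $i,j$. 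Now use simplicity: write $[x_i,x_j]=a_{ij}x_{l(i,j)}$ with $a_{ij}=0$ or $v(a_{ij})=0$. If $a_{ij}=0$ the bracket vanishes and imposes no condition. If $v(a_{ij})=0$ then $a_{ij}$ is a unit of $\mC[\![t]\!]$, so $t^{n_i+n_j}a_{ij}x_{l(i,j)}$ lies in $H_{\mbf{n}}$ precisely when $n_{l(i,j)}\le n_i+n_j$, and $t^{n_i}a_{ij}x_{l(i,j)}$ lies in $H_{\mbf{n}}$ precisely when $n_{l(i,j)}\le n_i$. In the subalgebra case these are exactly the inequalities defining $\Cal{C}_B^{\le}$ in \eqref{conesubalgebra}. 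In the ideal case, applying the membership condition both to $[t^{n_i}x_i,x_j]$ and to $[t^{n_j}x_j,x_i]=-t^{n_j}a_{ij}x_{l(i,j)}$ yields the pair of constraints $n_{l(i,j)}\le n_i$ and $n_{l(i,j)}\le n_j$, which are exactly those defining $\Cal{C}_B^{\vtl}$ in \eqref{coneideal}. Thus $\Cal{D}_B^*=\Cal{C}_B^*\cap\mZ_{\ge 0}^d$, and \eqref{sumcone} follows.

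The only point that requires a little care — and is the main, though modest, obstacle — is the first reduction in the previous paragraph: one must observe that, for a $\mC[\![t]\!]$-submodule with a fixed finite set of module generators, closure under the bracket (against all of $L$, resp. against the submodule) is equivalent to closure on the finitely many generator-level brackets, which is immediate from $\mC[\![t]\!]$-bilinearity of the Lie bracket. Everything else is a direct valuation computation in $\mC[\![t]\!]$, using that $t^{n_i}a x_l$ with $a$ a unit belongs to $t^{n_l}\mC[\![t]\!]\,x_l$ exactly when $n_i\ge n_l$.
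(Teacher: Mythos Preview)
Your proof is correct and follows exactly the same approach as the paper: reduce to showing $\Cal{D}_B^*=\Cal{C}_B^*\cap\mZ^d$ and then invoke \eqref{niceform}. The paper merely asserts this equality with the word ``observe'', whereas you supply the straightforward valuation computation that justifies it.
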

\begin{proof} Observe that $\Cal{D}_B^*=\Cal{C}_B^*\cap \mZ^d$ for each of the two possible values of $*$. Thus, the result follows from \eqref{niceform}. \end{proof} 

The expression \eqref{sumcone} is a sum over all integer points of a polyhedral cone. Sums of this kind may be computed using the Elliott-MacMahon algorithm (see \cite{Stanley1973}, Section 3). The running time of the algorithm is exponential in the dimension of $L$, so the algorithm is only practical for Lie algebras of relatively small dimensions. 

Proposition~\ref{calc} together with Theorem~\ref{remove} provide a way of calculating reduced zeta functions of Lie algebras possessing a simple basis with respect to which all pairs $(i,j)$ ($1\le i<j \le d$) are removable. We list below some examples of reduced zeta functions found using this method. One can use this procedure to calculate, with relatively little effort, reduced zeta functions of many other Lie algebras. However, it appears that a Lie algebra needs to possess a basis that is simple or is `close' to being simple in order to be amenable to Theorem~\ref{remove}. Thus, the theorem simplifies calculations only for a special class of Lie algebras. 

\begin{exam} Consider
\[ F_{2,n}=\langle x_1,x_2,\ldots,x_n,\; y_{ij} \: (1\le i<j\le n) \;| \; [x_i,x_j]=y_{ij} \; \text{for} \; i<j \rangle, \] 
the free Lie algebra of nilpotency class 2 on $n$ generators. (Here, and in the sequel, the omitted  Lie brackets between elements of the basis are assumed to be $0$).  For every $n$-tuple $\mbf{l}=\{l_1,\ldots,l_n\}$ of integers and for all $z\in\mC^*$, let $\psi_{\mbf{l},z}:F_{2,n} \ra F_{2,n}$ be the linear map given by
\begin{eqnarray*}
 x_i & \mapsto & z^{l_i} x_i \quad \quad\;\;\text{ for all } i, \\  
 y_{ij} & \mapsto & z^{l_i+l_j} y_{ij} \quad \text{ whenever } i<j.   
\end{eqnarray*}
One can check that $\psi_{\mbf{l},z}$ is an automorphism of $F_{2,n}$. The maps 
$\psi_{\mbf{l},z}$, as $\mbf{l}$ varies, witness removability of all pairs 
$(l,s)$ (with $l<s$) with respect to the basis 
$$
B_n=\{x_1,\ldots,x_n,\; y_{ij} \: (1\le i<j\le n)\}.
$$ 
Thus, $B_n$ is a nice and simple basis of $F_{2,n}$. It follows that if $C$ is a subset of $B_n$, then the quotient of $F_{2,n}$ by the smallest ideal containing $C$ has an obvious nice and simple basis. This provides many examples of Lie algebras of class $2$ with a nice and simple basis.  
\end{exam}

\begin{exam}
More specifically, consider the Heisenberg Lie algebra 
$$
H=F_{2,2}=\langle x,y,z \; | \; [x,y]=z \rangle.
$$
By Proposition \ref{calc}, 
\begin{eqnarray*}
R_{H}^{\vtl}(T) & = & \sum_{n_3=0}^{\infty} \sum_{n_1=n_3}^{\infty}\sum_{n_2=n_3}^{\infty} T^{n_1+n_2+n_3}  \\
& = & \sum_{n_3=0}^{\infty} \sum_{m_1=0}^{\infty} \sum_{m_2=0}^{\infty}
T^{3n_3+m_1+m_2} = \frac{1}{(1-T^3)(1-T)^2}.
\end{eqnarray*}
(Here we substitute $m_1=n_1-n_3$, $m_2=n_2-n_3$.) It is also routine to calculate the subalgebra zeta function of $H$:
$$R_H^{\le}(T)=\frac{1+T+T^2}{(1-T^2)^2 (1-T)}.$$
The reduced zeta functions of $H^{n}=\overbrace{H\oplus\cdots \oplus H}^{n}$ can be immediately derived via Theorem \ref{mult}: 
$R_{H^n}^{*}(T)=(R_H^{*}(T))^n$.
These expressions are much shorter than those of the usual zeta functions of $H^n$, which so far have only be calculated for $n\le 4$ in the case of ideals  and for $n\le 2$ in the case of subalgebras (see~\cite{duSW}).
\end{exam}
   
\begin{exam} We calculate the ideal reduced zeta function of $F_{2,3}$. 
By Proposition \ref{calc},
\[
R_{F_{2,3}}^{\vtl}(T)=\sum_{\mbf{n}\in \Cal{D}_B} T^{n_1+n_2+\cdots+n_6}
\]
where
\[\Cal{D}_B=\{\mbf{n}\in\mZ_{\ge 0}^6: n_1\ge n_4,\, n_1\ge n_5,\, n_2 \ge n_4,\, n_2\ge n_6,\,
n_3\ge n_5,\, n_3\ge n_6\}
\] 
($n_4$, $n_5$, $n_6$ correspond to $y_{12}$, $y_{13}$, $y_{23}$ respectively).
One can calculate this sum by splitting it into the following $6$ disjoint cases: 
$n_4\le n_5\le n_6$, $n_4\le n_6<n_5$, $n_5\le n_6<n_4$, $n_5<n_4\le n_6$, $n_6<n_4\le n_5$,
$n_6<n_5<n_4$; in each of the cases, the sum becomes a geometric series. The result is 
\[ R_{F_{2,3}}^{\vtl}(T)=\frac{1+2T^3+2T^5+T^8}{(1-T)^3 (1-T^3) (1-T^5) (1-T^6)}. \]  
\end{exam}

\begin{exam}
Consider nilpotent Lie algebras of maximal class:
\[M_q=\langle y,x_1,\ldots,x_q \; | \; [y,x_i]=x_{i+1}, \; i=1,\ldots,q-1 \rangle. \]  
 The ideal zeta functions of $M_q$ have been calculated for $q\le 4$ by G. Taylor~\cite{Taylor}, and the expressions become increasingly complicated as $q$ increases. By contrast, it is easy to compute the reduced ideal zeta function of $M_q$ for an arbitrary $q$.
For all $r,l\in \mZ$ and $z\in \mC^*$, the linear map given by $y\mapsto z^r y$, $x_i\mapsto z^{ir+l} x_i$ ($i=1,\ldots,q$) is an automorphism  
of $M_n$. Varying $r$ and $l$, one can easily show that the basis $\{y,x_1,\ldots,x_d\}$ is nice.
Hence,
\begin{eqnarray*}
 R_{M_q}^{\vtl}(T)&=&
\sum_{n_1\ge \ldots \ge n_q\ge 0}\; \sum_{n_0\ge n_2} T^{n_0+n_1+\cdots+n_q} \\
& = & \frac{1}{(1-T)^2\prod_{j=3}^{q+1}(1-T^j)}.
\end{eqnarray*}
\end{exam}

\begin{exam}\label{coinc} Finally, consider the Lie algebra
$$Fil_4=\langle z,x_1,x_2,x_3,x_4 \; | \; [z,x_1]=x_2, [z,x_2]=x_3, [z,x_3]=x_4, [x_1,x_2]=x_4 \rangle.$$
The given basis is nice and simple. The corresponding polyhedral cone   
$$\Cal{C}_{Fil_4}^{\vtl}=\{(y_0,y_1,y_2,y_3,y_4)\in \mR_{\ge 0}^{5} :
y_0\ge y_2 \ge y_3\ge y_4, y_1 \ge y_2\}$$
is the same as the cone \eqref{coneideal} of the maximal class Lie algebra 
$M_4$. Hence
$$R_{Fil_4}^{\vtl}(T)=R_{M_4}^{\vtl}(T)=
\frac{1}{(1-T)^2 (1-T^3) (1-T^4) (1-T^5)}.$$
Note that the usual ideal zeta functions of $Fil_4$ and $M_4$ are not equal (see~\cite{duSW}). 
\end{exam}

\section{Functional equations}\label{funeqs}

It is well known that many uniform zeta functions of Lie algebras (defined over $\mZ$) satisfy a functional equation of the form
\begin{equation}
\tzeta_{L,p}^*(T^{-1})|_{p\ra p^{-1}}=(-1)^{\epsilon}p^a T^b \tzeta_{L,p}^*(T) \label{funeq}
\end{equation}
where $p\ra p^{-1}$ indicates that all occurrences of $p$ in an expression should be replaced by $p^{-1}$ and $\epsilon$, $a$, $b$ are suitable integers (see~\cite{survey}, Subsection 5.7, for more detail). Recently, C.~Voll~\cite{Voll} has proved that a functional equation exists for all ideal zeta functions of Lie algebras of nilpotency class at most $2$ and for all subalgebra zeta functions.

Let $L$ be a Lie algebra over $\mZ$ such that 
$P_{L\otimes_{\mZ} \mQ[\![t]\!],\Cal{X}^*}(T)=f(\bL,T)$
 where $f(X,T)$ is a rational function. Then, by \eqref{connection}, 
$R_{L}^*(T)=f(1,T)$.
If $\tzeta_{L,p}^*$ has a functional equation \eqref{funeq}, so does the reduced zeta function:
\[ R_{L}^*(T^{-1})=(-1)^{\epsilon} T^b R_{L}^*(T). \] 
Thus, functional equations of reduced zeta functions are related to those of usual zeta functions.

In the sequel, we shall investigate functional equations of reduced zeta functions of Lie algebras possessing a nice and simple basis. Let $L$ be a torsion-free $d$-dimensional Lie algebra over $\mC[\![t]\!]$. Assume $L$ has a nice (with respect to a particular choice of $*$) and simple basis $B=\{x_1,\ldots,x_d\}$. 
We shall make use of the following result, which is a restatement of a theorem due to R. Stanley (\cite{Stanley1973}, Theorem 4.1).

\begin{thm}\label{Stanthm} Let $Q_1,Q_2,\ldots,Q_w$ be non-zero homogeneous linear forms with integer coefficients in real variables $z_1,\ldots,z_d$. Let
\begin{eqnarray*}
\Cal{C} &=& \{(z_1,\ldots,z_d)\in \mR_{\ge 0}^d: Q_i (z_1,\ldots,z_d)\ge 0 \quad (i=1,\ldots,w)\}, 
\textrm{ and let } \\
 \Cal{C}^{\mathrm o} &=& \{(z_1,\ldots,z_d)\in \mR_{>0}^d: Q_i (z_1,\ldots,z_d)> 0 \quad (i=1,\ldots,w)\} 
\end{eqnarray*}
be the interior of $\Cal{C}$. Assume $\Cal{C}^{\mathrm o}\cap \mZ^d\ne \varnothing$.
Define
\begin{eqnarray*}
F(X_1,\ldots,X_d)& =& \sum_{\mbf{n}\in \Cal{C}\cap\mZ^d} X_1^{n_1} X_2^{n_2}\cdots X_d^{n_d}\quad
 \textrm{ and } \\  
\bar{F} (X_1,\ldots,X_d)&=& \sum_{\mbf{n}\in \Cal{C}^{\mathrm o}\cap\mZ^d} X_1^{n_1} X_2^{n_2}\cdots X_d^{n_d}.
\end{eqnarray*} Then $F(\mbf{X})$ and $\bar{F}(\mbf{X})$ are rational functions of the $X_i$'s related by\[ \bar{F}(X_1,X_2,\ldots,X_d)=(-1)^{d} F(X_1^{-1},\ldots,X_d^{-1}). \]
\end{thm}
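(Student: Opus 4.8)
The plan is to reduce the statement to the Stanley reciprocity theorem for a polyhedral cone (which is exactly what is being restated, since this "Theorem" is described as a restatement of \cite{Stanley1973}, Theorem 4.1). Since the paper explicitly states that this is a restatement, the natural proof is to explain how to derive it from Stanley's original formulation rather than to reprove Stanley. First I would recall the setup: the cone $\Cal{C}$ is a rational polyhedral cone cut out in $\mR_{\ge 0}^d$ by finitely many homogeneous linear inequalities $Q_i \ge 0$, and $\Cal{C}^{\mathrm o}$ is its relative interior (which here, since the hypothesis $\Cal{C}^{\mathrm o}\cap\mZ^d\ne\varnothing$ guarantees $\Cal{C}$ is full-dimensional, is the topological interior). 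The two generating functions $F$ and $\bar F$ are the Hilbert series of the semigroups $\Cal{C}\cap\mZ^d$ and $\Cal{C}^{\mathrm o}\cap\mZ^d$, graded by the multidegree $\mbf{n}\in\mZ^d$.

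The key step is to invoke Stanley's monoid reciprocity: for a full-dimensional rational cone $\Cal{C}$, the Hilbert series $F$ of $\Cal{C}\cap\mZ^d$ is a rational function of $X_1,\ldots,X_d$, and the Hilbert series of the interior lattice points satisfies $\bar F(X_1,\ldots,X_d) = (-1)^{\dim\Cal{C}}\,F(X_1^{-1},\ldots,X_d^{-1})$, with the identity understood as one between rational functions. Here $\dim\Cal{C}=d$ by the nonemptiness hypothesis on $\Cal{C}^{\mathrm o}\cap\mZ^d$, which yields the sign $(-1)^d$. To connect this with the hypotheses as stated, I would note that the $Q_i$ are assumed non-zero, so each inequality genuinely cuts down the positive orthant, and that replacing "$\ge 0$" by "$>0$" together with replacing the ambient $\mR_{\ge 0}^d$ by $\mR_{>0}^d$ is precisely intersecting with the open facets, i.e.\ taking the interior; thus $\bar F$ is indeed the interior Hilbert series.

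The one genuine point requiring care — and the step I expect to be the main obstacle — is the passage from the formal-power-series identities defining $F$ and $\bar F$ on their respective domains of convergence to an identity of rational functions valid after the substitution $X_i\mapsto X_i^{-1}$. The series for $F$ converges for $|X_i|$ small and the series for $\bar F$ converges there too, but the substitution $X_i\mapsto X_i^{-1}$ takes us outside both domains, so the equality must be read as an equality in the field of rational functions $\mQ(X_1,\ldots,X_d)$, obtained by analytic continuation / by the fact that a rational function is determined by its power-series expansion on any nonempty open set. I would handle this by first establishing, via the Elliott--MacMahon (or Brion-type) decomposition of $\Cal{C}$ into unimodular subcones, that $F$ is the power-series expansion of an explicit rational function, doing the same for $\bar F$, and then checking the sign-and-inversion relation cone-by-cone on the unimodular pieces, where it is the elementary identity $\sum_{n\ge 0}X^n = 1/(1-X)$ versus $\sum_{n\ge 1}X^n = X/(1-X) = -\,(1/(1-X^{-1}))$ and its $d$-fold product. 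Summing over the pieces (with inclusion–exclusion to account for shared faces, which is exactly where the interior points enter) gives the result; alternatively one simply cites \cite{Stanley1973}, Theorem 4.1, verbatim, since the present statement is a mild repackaging of it.
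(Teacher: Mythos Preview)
The paper gives no proof of this theorem at all: it is stated purely as a citation, introduced by ``The following result, which is a restatement of a theorem due to R.~Stanley (\cite{Stanley1973}, Theorem 4.1),'' and then used as a black box via equation~\eqref{interior}. Your proposal correctly recognises this and, in your final clause, lands on exactly what the paper does---simply cite Stanley. The additional material you provide (the unimodular decomposition and the one-variable identity $\sum_{n\ge 1}X^n = -1/(1-X^{-1})$) is a correct sketch of one standard proof of Stanley reciprocity, but it goes well beyond what the paper attempts; for the purposes of matching the paper, the citation alone suffices.
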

Using Proposition \ref{calc}, we deduce that, whenever $(\Cal{C}_B^*)^{\mathrm o}\cap\mZ^d \ne \varnothing$,
\begin{equation}
R_L^*(T^{-1})=(-1)^d \sum_{\mbf{n}\in(\Cal{C}_B^*)^{\mathrm o}\cap\mZ^d} T^{n_1+\cdots+n_d}.   \label{interior}
\end{equation}

We consider subalgebra zeta functions first. 
\begin{prop} \label{subfe} Suppose $L$, a torsion-free $d$-dimensional Lie algebra over $\mC[\![t]\!]$, has a nice (with respect to $\le$) and simple basis.
 Then 
\[ R_{L}^{\le}(T^{-1})=(-1)^{d} T^d R_{L}^{\le} (T). \]
\end{prop}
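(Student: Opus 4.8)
The plan is to read off $R_L^{\le}(T)$ as a sum over lattice points of the cone $\Cal{C}_B^{\le}$ via Proposition~\ref{calc}, invoke the reflection formula \eqref{interior}, and then match the resulting sum over \emph{interior} lattice points with $T^d R_L^{\le}(T)$ by translating the cone by the all-ones vector.

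First I would record, using Proposition~\ref{calc}, that
\[ R_L^{\le}(T)=\sum_{\mbf{n}\in\Cal{C}_B^{\le}\cap\mZ^d} T^{n_1+\cdots+n_d}, \]
where $\Cal{C}_B^{\le}$ is the cone \eqref{conesubalgebra}, cut out by the coordinate inequalities $y_r\ge 0$ ($1\le r\le d$) together with one inequality $y_l\le y_i+y_j$ for each relation $[x_i,x_j]=ax_l$ with $a\ne 0$ (note $i\ne j$ by antisymmetry). The crucial step is then to show that translation by $\mbf 1=(1,\ldots,1)$ restricts to a bijection $\Cal{C}_B^{\le}\cap\mZ^d\to(\Cal{C}_B^{\le})^{\mathrm o}\cap\mZ^d$. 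For an integer vector $\mbf y$ this reduces to two elementary equivalences: $y_r\ge 0\iff y_r+1>0$, and $y_l\le y_i+y_j\iff (y_l+1)<(y_i+1)+(y_j+1)$ — the latter because this inequality has a single variable on the left and two on the right, so the shift increases the slack by exactly $1$, turning a non-strict integer inequality into a strict one (and when $l\in\{i,j\}$ both sides are trivially true anyway). Since these equivalences hold in both directions and preserve integrality, the translation is the claimed bijection. In particular $\mbf 0\in\Cal{C}_B^{\le}\cap\mZ^d$ forces $\mbf 1\in(\Cal{C}_B^{\le})^{\mathrm o}\cap\mZ^d$, so this set is non-empty and \eqref{interior} applies.

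Finally I would note that the shift $\mbf n\mapsto\mbf n+\mbf 1$ raises the total degree $n_1+\cdots+n_d$ by exactly $d$, so that
\[ \sum_{\mbf{n}\in(\Cal{C}_B^{\le})^{\mathrm o}\cap\mZ^d} T^{n_1+\cdots+n_d}
= T^d\sum_{\mbf{m}\in\Cal{C}_B^{\le}\cap\mZ^d} T^{m_1+\cdots+m_d}
= T^d R_L^{\le}(T). \]
Substituting this into \eqref{interior} yields $R_L^{\le}(T^{-1})=(-1)^d T^d R_L^{\le}(T)$, as required.

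The only step needing genuine care is the translation bijection, and the feature that makes it work is precisely the shape of the defining inequalities of $\Cal{C}_B^{\le}$ — one coordinate bounded above by a sum of two coordinates — which is exactly what simplicity of the basis guarantees; everything else is routine bookkeeping on top of Proposition~\ref{calc} and Theorem~\ref{Stanthm}.
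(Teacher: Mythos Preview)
Your proof is correct and follows essentially the same approach as the paper's own proof: both verify that $(1,\ldots,1)$ lies in the interior of $\Cal{C}_B^{\le}$, observe that translation by the all-ones vector is a bijection from $\Cal{C}_B^{\le}\cap\mZ^d$ onto $(\Cal{C}_B^{\le})^{\mathrm o}\cap\mZ^d$, and then feed this into \eqref{interior}. You have simply spelled out the bijection verification in more detail than the paper does.
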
 
\begin{proof} Certainly, $(\Cal{C}_B^{\le})^{\mathrm o}\cap\mZ^d \ne \varnothing$: 
for example, $(1,1,\ldots,1)\in (\Cal{C}_B^{\le})^{\mathrm o}$. 
Note that the map 
\begin{equation}
 \mbf{n}=(n_1,\ldots,n_d)\mapsto (n_1+1,\ldots,n_d+1) \label{corr}
\end{equation}
is a bijection of $\Cal{C}_B^{\le}\cap\mZ^d$ onto 
$(\Cal{C}_B^{\le})^{\mathrm o} \cap\mZ^d$. Hence the right-hand side of
 \eqref{interior} is equal to $(-1)^{d} T^d R_L^{\le}(T)$, 
and the result follows. \end{proof}

Let us analyse the ideal zeta function of $L$. We assume that $L$ is nilpotent.
Let 
\[ 0=Z_0\le Z_1\le Z_2\le\cdots\le Z_c=L \]
 be the upper central series of $L$   
(so $Z_1=Z(L)$ is the centre of $L$ and $Z_{i+1}/Z_i=Z(L/Z_i)$).
For every element $x\in L$, define the \emph{height} $h(x)$ of $x$ to be the greatest number $r$ such that $x\in Z_r$.
Write  $x_l \prec x_i$ if there exists $j$ such that $[x_i,x_j]=a x_l$, $a\ne 0$. Let $\prec$ be the strict partial order on the set $\{x_1,\ldots,x_n\}$ defined as the transitive closure of the relation specified by the preceding sentence.
 Then \eqref{coneideal} may be rewritten as
\begin{equation} \Cal{C}_B^{\vtl} =  \{\mbf{y}\in \mR_{\ge 0}^d: y_l\le y_i
\; \text{whenever} \;  x_l \prec x_i \}
\end{equation}   
 Note that $h(x_i)>h(x_l)$ whenever $x_l \prec x_i$. Hence $(h(x_1),\ldots,h(x_n))\in \Cal (C_B^{\vtl})^{\mathrm o}$, so 
 $\Cal (C_B^{\vtl})^{\mathrm o} \cap\mZ^n \ne \varnothing$ and we can apply Theorem~\ref{Stanthm}. 

It is easy to understand why the function $R_L^{\vtl}(T)$ does not always have a functional equation: in general, there is no correspondence similar to \eqref{corr}. For example, the ideal reduced zeta function of
\[ L_{W}=\langle z,w_1,w_2,x_1,x_2,y\; |\; [z,w_1]=x_1,\, [z,w_2]=x_2,\, [z,x_1]=y \rangle \]
is 
\[ R_{L_W}^{\vtl}=\frac{1+T+T^2+2T^3+2T^4+2T^5+2T^6+T^7+T^8}
{(1-T)^2 (1-T^2) (1-T^3) (1-T^4) (1-T^5)}. 
\]
(Of course, the given basis is nice and simple.) The Lie algebra $L_W$ is one of the examples of nilpotent Lie algebras whose usual (local) ideal zeta function does not have a functional equation discovered by L.\ Woodward (see~\cite{duSW}).
The following result gives sufficient conditions for a functional equation to hold.

\begin{prop} \label{funeqid} Let $L$ be a torsion-free finite-dimensional nilpotent Lie algebra over $\mC[\![t]\!]$. Suppose $L$ has a nice (with respect to $\vtl$) and simple basis $\{x_1,\ldots,x_d\}$ such that whenever $x_l \prec x_i$ and 
$h(x_i)>h(x_l)+1$, there exists $r$ such that \\ $x_l\prec x_r \prec x_i$. Let $w=\sum_{i=1}^d h(x_i)$. Then
\[ R_{L}^{\vtl}(T^{-1})=(-1)^{d} T^w R_{L}^{\vtl} (T). \]
\end{prop}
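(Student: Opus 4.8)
The plan is to mimic the proof of Proposition~\ref{subfe}, but replace the simple translation \eqref{corr} by a more subtle bijection dictated by the height function. By \eqref{interior} (which applies since $(h(x_1),\ldots,h(x_d))\in(\Cal{C}_B^{\vtl})^{\mathrm o}\cap\mZ^d$), it suffices to produce a bijection
\[
\Phi:\Cal{C}_B^{\vtl}\cap\mZ^d\;\longrightarrow\;(\Cal{C}_B^{\vtl})^{\mathrm o}\cap\mZ^d
\]
that increases the coordinate sum by exactly $w=\sum_i h(x_i)$, i.e.\ $\sum_r\Phi(\mbf n)_r=\sum_r n_r+w$ for all $\mbf n$. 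The natural candidate is $\Phi(\mbf n)_r=n_r+h(x_r)$. Indeed the sum increases by $w$ by construction, and this map is clearly injective; the two things to check are that $\Phi(\mbf n)$ really lands in the \emph{open} cone, and that every point of the open cone is hit.

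For the first point: if $x_l\prec x_i$ then $h(x_l)<h(x_i)$, so $h(x_l)\le h(x_i)-1$, and since $n_l\le n_i$ we get $n_l+h(x_l)\le n_i+h(x_i)-1<n_i+h(x_i)$, so the defining inequalities of the cone hold strictly; also all coordinates are $\ge h(x_r)\ge 0$, but we need them $>0$, which forces us to be slightly careful about basis elements of height~$0$ --- however height~$0$ means lying in $Z_0=0$, so no basis vector has height~$0$ and indeed $h(x_r)\ge1$ for all $r$. Hence $\Phi(\mbf n)\in(\Cal{C}_B^{\vtl})^{\mathrm o}$. The surjectivity is where the extra hypothesis on $L$ enters, and this is the main obstacle. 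Given $\mbf m\in(\Cal{C}_B^{\vtl})^{\mathrm o}\cap\mZ^d$, set $n_r=m_r-h(x_r)$; one must show $\mbf n\in\Cal{C}_B^{\vtl}\cap\mZ^d$, i.e.\ $n_r\ge0$ and $n_l\le n_i$ whenever $x_l\prec x_i$. The inequality $n_l\le n_i$ reads $m_l-h(x_l)\le m_i-h(x_i)$, i.e.\ $m_i-m_l\ge h(x_i)-h(x_l)$; since $\mbf m$ is in the open cone we only know $m_i-m_l\ge1$, so when $h(x_i)-h(x_l)\ge2$ we need more. This is exactly where the hypothesis ``$x_l\prec x_i$ and $h(x_i)>h(x_l)+1$ implies there is $r$ with $x_l\prec x_r\prec x_i$'' is used: it lets us build a $\prec$-chain $x_l=x_{r_0}\prec x_{r_1}\prec\cdots\prec x_{r_k}=x_i$ with consecutive heights differing by exactly~$1$ (so $k=h(x_i)-h(x_l)$), and then $m_i-m_l=\sum_{j=1}^k(m_{r_j}-m_{r_{j-1}})\ge k=h(x_i)-h(x_l)$, as required. (One should note $\prec$ is the transitive closure, so $x_l\prec x_r\prec x_i$ genuinely gives intermediate elements, and iterating the hypothesis refines any chain until all height gaps are~$1$.)

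So the key steps, in order, are: (1) verify $(\Cal{C}_B^{\vtl})^{\mathrm o}\cap\mZ^d\ne\varnothing$ and invoke \eqref{interior} --- already done in the text preceding the proposition; (2) observe no basis vector has height~$0$, so $h(x_r)\ge1$; (3) define $\Phi(\mbf n)_r=n_r+h(x_r)$ and check it maps $\Cal{C}_B^{\vtl}\cap\mZ^d$ into $(\Cal{C}_B^{\vtl})^{\mathrm o}\cap\mZ^d$; (4) check injectivity (immediate); (5) prove surjectivity using the chain-refinement argument powered by the hypothesis; (6) conclude that the right-hand side of \eqref{interior} equals $T^w R_L^{\vtl}(T)$, giving $R_L^{\vtl}(T^{-1})=(-1)^d T^w R_L^{\vtl}(T)$. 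The only genuinely non-routine step is (5), the surjectivity via the height-chain argument; everything else is bookkeeping analogous to Proposition~\ref{subfe}.
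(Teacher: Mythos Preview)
Your argument is correct and matches the paper's: both use the shift $\mbf n\mapsto(n_1+h(x_1),\ldots,n_d+h(x_d))$ and then invoke \eqref{interior}. The paper packages the use of the hypothesis slightly differently---it first observes that, under the hypothesis, $\Cal{C}_B^{\vtl}$ is already cut out by the constraints $y_l\le y_i$ with $h(x_i)=h(x_l)+1$ alone, after which bijectivity of the shift is immediate---whereas your chain-refinement is the same verification done directly; one small omission in your step~(5) is the check $n_r\ge 0$ (i.e.\ $m_r\ge h(x_r)$), which follows by running a $\prec$-chain from $x_r$ down to an element of height~$1$ and summing the strict inequalities along it.
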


\begin{proof} The hypothesis ensures that
$$
\Cal{C}_B^{\vtl} =  \{\mbf{y}\in \mR_{\ge 0}^d: y_l\le y_i
\; \text{whenever} \;  x_l \prec x_i \textrm{ and }
h(x_i)=h(x_l)+1  \}.
$$
Then one can easily check that 
$$\xi:(n_1,\ldots,n_d)\mapsto (n_1+h(x_1),\ldots,n_d+h(x_d))$$
 is a bijection from $\Cal{C}_B^{\vtl}\cap \mZ^d$ onto 
$(\Cal{C}_B^{\vtl})^{\mathrm o}\cap \mZ^d$. The sum of entries of $\xi(\mbf{n})$ exceeds the sum of entries of $\mbf{n}$ by $w$. The result follows by applying \eqref{interior}. \end{proof}

The hypothesis of Proposition \ref{funeqid} is clearly satisfied by any Lie algebra of nilpotency class $2$ with a nice and simple basis. Hence, ideal reduced zeta functions of such algebras have functional equations.

\begin{cor}\label{idefe} Let $L$ be a torsion-free $d$-dimensional Lie algebra over $\mC[\![t]\!]$ of nilpotency class $2$. If $L$ has a nice (with respect to $\vtl$) and simple basis, then
\[  R_{L}^{\vtl}(T^{-1})=(-1)^{d} T^{d+u} R_{L}^{\vtl} (T) \]
where $u=\dim(L/Z(L))$ and $Z(L)$ is the centre of $L$. 
\end{cor}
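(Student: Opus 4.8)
The plan is to deduce this corollary from Proposition~\ref{funeqid} by checking its hypothesis and evaluating the integer $w$. Since $L$ has nilpotency class $2$, the upper central series is $0=Z_0\le Z_1\le Z_2=L$, where $Z_1=Z(L)$. Thus every basis element $x_i$ has height $h(x_i)\in\{1,2\}$: namely $h(x_i)=1$ if $x_i\in Z(L)$ and $h(x_i)=2$ otherwise. First I would observe that whenever $x_l\prec x_i$ we have $h(x_i)>h(x_l)$, hence $h(x_l)=1$ and $h(x_i)=2$, so $h(x_i)=h(x_l)+1$ always holds. Consequently the ``gap'' condition in Proposition~\ref{funeqid} --- that $h(x_i)>h(x_l)+1$ forces an intermediate $x_r$ --- is vacuously satisfied, and Proposition~\ref{funeqid} applies directly.

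Next I would compute $w=\sum_{i=1}^d h(x_i)$. Let $u=\dim(L/Z(L))$; this is the number of basis elements of height $2$ (equivalently, the number of $x_i\notin Z(L)$, since $B$ is a basis adapted to the subspace $Z(L)$ --- more precisely, each $x_i$ either lies in $Z(L)$ or does not, and exactly $d-\dim Z(L)=u$ of them fail to). Then $w = 2u + 1\cdot(d-u) = d + u$. Plugging this into the conclusion of Proposition~\ref{funeqid} gives
\[ R_L^{\vtl}(T^{-1}) = (-1)^d T^{d+u} R_L^{\vtl}(T), \]
which is exactly the claimed functional equation.

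There is essentially no obstacle here: the corollary is a direct specialisation. The only point requiring a word of care is the identity $\#\{i : x_i\notin Z(L)\} = \dim(L/Z(L))$; this holds because for any basis $\{x_1,\dots,x_d\}$ of $L$, the images in $L/Z(L)$ of those $x_i$ lying outside $Z(L)$ need not themselves form a basis of $L/Z(L)$ in general, so one should note instead that $h(x_i)=1 \iff x_i\in Z(L)$ and count: $\sum_i h(x_i) = \#\{i: h(x_i)=2\}\cdot 2 + \#\{i:h(x_i)=1\} = d + \#\{i:x_i\notin Z(L)\}$, and then invoke the fact that the number of basis vectors outside a subspace of codimension $u$, when the basis is suitably chosen --- which one may always assume, or which follows from simplicity --- equals $u$. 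In fact, for the functional equation to take the stated clean form one does implicitly want the nice and simple basis to be compatible with $Z(L)$; I would remark that this can be arranged, or alternatively reinterpret $u$ as $\#\{i: x_i\notin Z(L)\}$ and note it coincides with $\dim(L/Z(L))$ for the bases under consideration. With that understood, the proof is a one-line application of Proposition~\ref{funeqid}.

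\begin{proof}
Since $L$ has nilpotency class $2$, its upper central series is $0=Z_0\le Z_1\le Z_2=L$ with $Z_1=Z(L)$, so $h(x_i)\in\{1,2\}$ for every $i$, with $h(x_i)=1$ if and only if $x_i\in Z(L)$. If $x_l\prec x_i$, then $h(x_i)>h(x_l)$ (as noted before Proposition~\ref{funeqid}), forcing $h(x_l)=1$ and $h(x_i)=2$; thus $h(x_i)=h(x_l)+1$, and the case $h(x_i)>h(x_l)+1$ never occurs. Hence the hypothesis of Proposition~\ref{funeqid} is satisfied. Writing $u$ for the number of basis elements not lying in $Z(L)$, which equals $\dim(L/Z(L))$, we have
\[ w=\sum_{i=1}^d h(x_i) = 2u + (d-u) = d+u. \]
Proposition~\ref{funeqid} now gives $R_L^{\vtl}(T^{-1})=(-1)^d T^w R_L^{\vtl}(T) = (-1)^d T^{d+u} R_L^{\vtl}(T)$, as required.
\end{proof}
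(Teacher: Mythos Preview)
Your approach is exactly the paper's: the paper gives no formal proof, merely remarking before the corollary that ``the hypothesis of Proposition~\ref{funeqid} is clearly satisfied by any Lie algebra of nilpotency class~$2$ with a nice and simple basis''. You reproduce this and go further by actually computing $w=\sum_i h(x_i)=d+\#\{i:x_i\notin Z(L)\}$, which the paper does not spell out.

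The subtlety you flag is genuine and worth a comment. Simplicity of the basis alone does \emph{not} force $\#\{i:x_i\notin Z(L)\}=\dim(L/Z(L))$: for instance, the basis $\{x_1,x_2,x_3,x_4\}$ with $[x_1,x_3]=[x_2,x_3]=x_4$ is simple, yet $x_1-x_2$ is central, so $\dim Z(L)=2$ while only one basis vector lies in $Z(L)$. However, that basis is not \emph{nice} (the cone sum gives $\frac{1}{(1-T)^3(1-T^4)}$, whereas $R_L^{\vtl}(T)=\frac{1}{(1-T)^3(1-T^3)}$), so it does not contradict the corollary. A clean way to close the gap is to observe that the exponent in the functional equation is an invariant of $R_L^{\vtl}$ and hence of $L$, so $w$ does not depend on which nice simple basis is used; one then needs only note that a nice simple basis adapted to $Z(L)$ exists (or argue directly that niceness forces $\{x_i:x_i\in Z(L)\}$ to span $Z(L)$). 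The paper does not address this point either, so your proof is at least as complete as the original.
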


\providecommand{\bysame}{\leavevmode\hbox to3em{\hrulefill}\thinspace}
\providecommand{\MR}{\relax\ifhmode\unskip\space\fi MR }
\providecommand{\MRhref}[2]{%
  \href{http://www.ams.org/mathscinet-getitem?mr=#1}{#2}
}
\providecommand{\href}[2]{#2}

\end{document}